\newtheorem{theorem}{Theorem}[section]
\newtheorem{lemma}{Lemma}[section]
\newtheorem{corollary}{Corollary}[section]
\def\~#1{\tilde{#1}}
\def\%#1{\mathcal{#1}}
\def\eqref#1{(\ref{#1})}
\def\klr#1{(#1)}
\def\bklr#1{(#1)}
\def\bbbkle#1{[#1]}
\def\klg#1{\{#1\}}
\def\bklg#1{\{#1\}}
\def\norm#1{\Vert#1\Vert}
\def\abs#1{\vert#1\vert}
\def\babs#1{\vert#1\vert}
\def\bbbabs#1{\biggl\vert#1\biggr\vert}
\begin{document}
\begin{frontmatter}

\title{New rates for exponential approximation and the theorems of R\'enyi and
Yaglom\thanksref{T1}}
\runtitle{New rates for exponential approximation}
\thankstext{T1}{Supported by the Institute for Mathematical Sciences
at the National University of Singapore.}

\begin{aug}
\author[A]{\fnms{Erol A.}~\snm{Pek\"oz}\ead[label=e1]{pekoz@bu.edu}}
\and
\author[B]{\fnms{Adrian}~\snm{R\"ollin}\corref{}\thanksref{t2}\ead[label=e2]{staar@nus.edu.sg}}
\thankstext{t2}{Supported in part by NUS Grant R-155-000-098-133.}
\runauthor{E. A. Pek\"oz and A. R\"ollin}

\affiliation{Boston University and National University of Singapore}
\address[A]{
School of Management\\
Boston University\\
595 Commonwealth Avenue\\
Boston, Massachusetts 02215\\
\printead{e1}}

\address[B]{Department of Statistics\\
\quad and Applied Probability\\
National University of Singapore\\
6 Science Drive 2\\
Singapore 117546\\
\printead{e2}}
\end{aug}

\received{\smonth{3} \syear{2010}}

\begin{abstract}
We introduce two abstract theorems that reduce a variety of
complex exponential distributional approximation problems to the construction of
couplings.  These are applied to obtain new rates of convergence with respect to
the Wasserstein and Kolmogorov metrics for the theorem of R\'enyi on random
sums and generalizations of it, hitting times for Markov chains, and to obtain a
new rate for the classical theorem of Yaglom on the exponential asymptotic
behavior of a critical Galton--Watson process conditioned on nonextinction.  The
primary tools are an adaptation of Stein's method, Stein couplings, as well as
the equilibrium distributional transformation from renewal theory.
\end{abstract}

\begin{keyword}[class=AMS]
\kwd[Primary ]{60F05}
\kwd[; secondary ]{60J10, 60J80}.
\end{keyword}

\begin{keyword}
\kwd{Exponential approximation}
\kwd{geometric convolution}
\kwd{first passage times}
\kwd{critical Galton--Watson branching process}
 \kwd{Stein's method}
 \kwd{equilibrium and
size-biased distribution}.
  \end{keyword}

\end{frontmatter}

\section{Introduction}

The exponential distribution arises as an asymptotic limit in a wide variety of
settings involving rare events, extremes, waiting times, and quasi-stationary
distributions.  As discussed in the preface of \citet{Aldous1989}, the tremendous
difficulty in obtaining  explicit bounds on the error of the exponential
approximation in more than the most elementary of settings apparently has left a
gap in the literature.  The classical theorem of \citet{Yaglom1947} describing
the asymptotic exponential behavior of a critical Galton--Watson process
conditioned on nonextinction, for example, has a large literature of extensions
and embellishments [see \citet{Lalley2009}, e.g.] but the complex
dependencies between offspring have apparently not previously allowed for
obtaining explicit error bounds. Stein's method, introduced in \citet{Stein1972},
is now a well-established method for obtaining explicit bounds in distributional
approximation problems in settings with dependence  [see \citet{Ross2007} for
an introduction]. Results for the normal and Poisson approximation, in
particular, are extensive but also are currently very actively being further
developed; see, for example, \citet{Chatterjee2008} and
\citet{Chen2009}.

There have been a few attempts to apply Stein's method to exponential
approximation. \citet{Weinberg2005} sketches a few potential applications but
only tackles simple examples thoroughly, and \citet{Bon2006} only considers
geometric convolutions. \citet{Chatterjee2006} breaks new ground by applying the
method to a challenging problem in spectral graph theory using exchangeable
pairs, but the calculations involved are application-specific and far from
elementary. In this article, in contrast, we develop a general framework that
more conveniently reduces a broad variety of complex exponential distributional
approximation problems to the construction of couplings.  We provide evidence
that our approach can be fruitfully applied to nontrivial applications and in
settings with dependence---settings where Stein's method typically is expected
to shine.

The article is organized as follows. In Section~\ref{sec1}, we present two
abstract theorems formulated in terms of couplings. We introduce a
distributional transformation (the ``equilibrium distribution'' from renewal
theory) which has not yet been extensively explored using Stein's method. We
also make use of Stein couplings similar to those introduced in \citet{Chen2009}.
In Section~\ref{sec7}, we give applications using these couplings to obtain
exponential approximation rates for the theorem of R\'enyi on random sums
and hitting times for Markov chains; our approach yields
generalizations of these results not previously available
in the literature. Furthermore, we consider the rate of convergence in the
classical theorem of Yaglom on the exponential asymptotic behavior of a
critical Galton--Watson process conditioned on nonextinction; this is the first
place this latter result has appeared in the literature. In Section~\ref{sec10},
we then give the postponed proofs for the main theorems.

\section{Main results} \label{sec1}

In this section, we present the framework in abstract form that will subsequently
be used in concrete applications in Section~\ref{sec7}.  This
framework is comprised of two approaches that we will describe here and then
prove in Section~\ref{sec10}.

To define the probability metrics used in this article, we need the
sets of test functions
\begin{eqnarray*}
    \%F_{\mathrm{K}} & = & \{\mathrm{I}[ \cdot \leq z] | z\in \mathbh{R}\},
    \\
    \%F_{\mathrm{W}} & =& \{h\dvtx \mathbh{R}\to\mathbh{R} | {}\mbox{$h$ is Lipschitz,
    $\norm{h'}\leq 1$}\},
    \\
    \%F_{\mathrm{BW}} & =& \{h\dvtx \mathbh{R}\to\mathbh{R} | {}\mbox{$h$ is Lipschitz,
    $\norm{h}\leq 1$ and $\norm{h'}\leq 1$}\}
\end{eqnarray*}
and then the
distance between two probability measures $P$ and $Q$ with respect to $\%F$ is
defined as
\begin{equation} \label{1}
    d_{\%F}\klr{P,Q}:=\sup_{f\in\%F}\bbbabs{\int_{\mathbh{R}} f \,d P-\int_{\mathbh{R}} f \,d Q}
\end{equation}
if the corresponding integrals are well-defined. Denote by $\mathop{d_{\mathrm{K}}}$, $\mathop{d_{\mathrm{W}}}$ and
$\mathop{d_{\mathrm{BW}}}$ the respective distances corresponding to the sets
$\%F_{\mathrm{K}}$, $\%F_{\mathrm{W}}$ and $\%F_{\mathrm{BW}}$. The subscripts
respectively denote the Kolmogorov, Wasserstein and bounded Wasserstein
distances. We can use the following two relations:
\begin{equation}\label{2}
    d_{\mathrm{BW}}\leq d_{\mathrm{W}}, \qquad d_{\mathrm{K}}(P,\operatorname{Exp}(1))
     \leq 1.74\sqrt{d_{\mathrm{W}}(P,\operatorname{Exp}(1))}.
\end{equation}
The first relation is clear, as $\%F_{\mathrm{BW}}\subset\%F_{\mathrm{W}}, $ and we
refer to \citet{Gibbs2002} for the second relation. It is worthwhile noting that
the second inequality can yield optimal bounds with respect to the $d_{\mathrm{K}}$
metric. This is in contrast to normal approximation where in fact $d_{\mathrm{W}}$
and~$d_{\mathrm{K}}$ often exhibit the same order of convergence and hence the
corresponding equivalent of \eqref{2} for the normal distribution does not yield
optimal bounds on~$d_{\mathrm{K}}$; cf. Corollary~\ref{cor10}.

Our first approach involves a coupling with the \emph{equilibrium distribution}
from renewal theory, and is related to the zero-bias coupling from
\citet{Goldstein1997} used for normal approximation [see also
\citeauthor{Bon2006} (\citeyear{Bon2006}), Lemma~6,  \citeauthor{Goldstein2005b} (\citeyear{Goldstein2005b,Goldstein2007}) and
\citet{Ghosh2009}].

\begin{definition}
Let $X$ be a nonnegative random variable with finite mean. We say that a random
variable $X^e$ has the \emph{equilibrium distribution w.r.t. $X$} if for all
Lip\-schitz~$f$
\begin{equation} \label{3}
    \mathbh{E} f(X) - f(0) =\mathbh{E} X   \mathbh{E} f'(X^e).
\end{equation}
\end{definition}

It is straightforward that this implies
\begin{equation} \label{4}
    \mathbh{P}(X^e\leq x) = \frac{1}{\mathbh{E} X}\int_0^x\mathbh{P}[X>y]\,dy
\end{equation}
and our first result below
can be thought of as formalizing the notion that when $\mathscr{L}(W)$ and  $\mathscr{L}(W^e)$
are approximately equal then $W$ has approximately an exponential distribution.

\begin{theorem}\label{thm1} Let $W$ be a nonnegative random variable with $\mathbh{E}
W= 1$ and let $W^e$ have the equilibrium distribution w.r.t.\ $W.$ Then, for
any
$\beta>0$,
\begin{equation} \label{5}
    d_{\mathrm{K}}\bklr{\mathscr{L}(W),\operatorname{Exp}(1)}\leq 12\beta + 2\mathbh{P}[\abs{W^e-W}>\beta]
\end{equation}
and
\begin{equation}\label{6}
    d_{\mathrm{K}}\bklr{\mathscr{L}(W^e),\operatorname{Exp}(1)}\leq \beta + \mathbh{P}[\abs{W^e-W}>\beta].
\end{equation}
If in addition $W$ has finite second moment, then
\begin{equation} \label{7}
    d_{\mathrm{W}}\bklr{\mathscr{L}(W),\operatorname{Exp}(1)}\leq 2\mathbh{E}\abs{W^e-W}
\end{equation}
and
\begin{equation}\label{8}
    d_{\mathrm{K}}\bklr{\mathscr{L}(W^e),\operatorname{Exp}(1)}\leq \mathbh{E}\abs{W^e-W};
\end{equation}
bound \eqref{8} also holds for $d_{\mathrm{W}}\bklr{\mathscr{L}(W^e),\operatorname{Exp}(1)}$.
\end{theorem}

Our second approach involves an adaptation of the \emph{linear Stein couplings}
introduced in \citet{Chen2009}.

\begin{definition} A triple $(W,W',G)$ of random variables is called a
\emph{constant Stein
coupling}~if
\begin{equation}\label{9}
    \mathbh{E}\klg{Gf(W')- Gf(W)} = \mathbh{E} f(W)
\end{equation}
for all $f$ with $f(0)=0$ and for which the expectations exist.
\end{definition}

Let
\begin{eqnarray*}
    r_1(\%F)=\mathop{\sup_{f\in\%F, }}_{ f(0)=0}\abs{\mathbh{E}\klg{Gf(W')- Gf(W) - f(W)}}
\end{eqnarray*}
and $r_2=\mathbh{E}\babs{\mathbh{E}^{W''}(GD)-1}$, where here and in the rest of the article
$D := W'-W$. The random variable $W''$ is defined on the same probability space
as $(W,W',G)$ and can be used to simplify the bounds (it is typically chosen so
that $r_2=0$); let $D' := W''-W$. At first reading one may simply set $W'' = W$
(in which case typically $r_2 \neq 0$); we refer to \citet{Chen2009} for a more
detailed discussion of Stein couplings. Our next result applies to general
random variables, but useful bounds can only be expected if they are coupled
together so that $r_1(\%F_{\mathrm{W}})$ is small.

\begin{theorem}\label{thm2} Let $W$, $W'$, $W''$ and $G$ be random variables
with finite first moments such that also $\mathbh{E}\abs{GD}<\infty$ and
$\mathbh{E}\abs{GD'}<\infty$. Then with the above definitions,
\begin{equation} \label{10}
   d_{\mathrm{W}}\bklr{\mathscr{L}(W),\operatorname{Exp}(1)}\leq
   r_1(\%F_{\mathrm{W}}) + r_2 + 2r_3 + 2r_3' + 2r_4 + 2r_4',
\end{equation}
where
\begin{eqnarray*}
    r_3&=&\mathbh{E}\bigl|{GD \mathrm{I}[\abs{D}>1]}\bigr|,
    \qquad r_3'=\mathbh{E}\bigl|{(GD-1) \mathrm{I}[\abs{D'}>1]}\bigr|,
    \\
    r_4 &=&\mathbh{E}\abs{G(D^2\wedge1)},\qquad\hspace*{15pt}
     r_4' =\mathbh{E}\bigl|{(GD-1)(\abs{D'}\wedge1)}\bigr|.
\end{eqnarray*}
The same bound holds for $d_{\mathrm{BW}}$ with $r_1(\%F_{\mathrm{W}})$ replaced by
$r_1(\%F_{\mathrm{BW}})$. Furthermore, for any $\alpha$, $\beta$ and $\beta'$,
\begin{eqnarray} \label{11}
    &&d_{\mathrm{K}}\bklr{\mathscr{L}(W),\operatorname{Exp}(1)}\nonumber
   \\[-8pt]\\[-8pt]
    &&\qquad    \leq 2r_1(\%F_{\mathrm{BW}})
        + 2r_2 + 2r_5 + 2r_5'
        +22(\alpha\beta+1)\beta' + 12\alpha\beta^2,\nonumber
\end{eqnarray}
where
\begin{eqnarray*}
    r_5&=&\mathbh{E}\bigl|GD \mathrm{I}[\abs{G}>\alpha\mbox{ or }\abs{D}>\beta]\bigr|,
    \\
    r'_5&=&\mathbh{E}\bigl|(1-GD) \mathrm{I}[\abs{G}>\alpha
        \mbox{ or }\abs{D}>\beta\mbox{ or }\abs{D'}>\beta']\bigr|.
\end{eqnarray*}
\end{theorem}

\subsection{Couplings}\label{sec2}

In this section, we present a way to construct the equilibrium distribution more
explicitly and also discuss a few constant Stein couplings.

\subsubsection{Equilibrium distribution via size biasing}\label{sec3}

Assume that $\mathbh{E} W = 1$ and let $W^s$ have the size bias distribution of $W$,
that is,
\begin{eqnarray*}
    \mathbh{E}\klg{W f(W)} = \mathbh{E} f(W^s)
\end{eqnarray*}
for all $f$ for which the expectation exist. Then, if $U$ has the uniform
distribution on $[0,1]$ independent of all else, $W^e := UW^s$ has the
equilibrium distribution w.r.t.~$W$. Indeed, for any Lipschitz $f$ with
$f(0)=0$ we have
\begin{eqnarray*}
    \mathbh{E} f(W)  = \mathbh{E} f(W)-f(0)
     = \mathbh{E}\klg{Wf'(UW)}
     = \mathbh{E} f'(U W^s) = \mathbh{E} f'(W^e).
\end{eqnarray*}
We note that this construction was also considered by \citet{GoldsteinPC} and it has
been observed by \citet{Pakes1992} that for a nonnegative random variable $W$
with $\mathbh{E} W <   \infty$, we have that $\mathscr{L}(W) = \mathscr{L}(UW^s)$ if and only if $W$
has exponential distribution.

\subsubsection{Exchangeable pairs}\label{sec4}

Let $(W,W')$ be an exchangeable pair. Assume that
\begin{eqnarray*}
    \mathbh{E}^W(W'-W) = -\lambda + \lambda R \qquad\mbox{on $\{W>0\}$.}
\end{eqnarray*}
Then, if we set $G = (W'-W)/(2\lambda)$, we have $r_1(\%F_{\mathrm{BW}}) \leq
\mathbh{E}\abs{R}$ and $r_1(\%F_{\mathrm{W}}) \leq
\mathbh{E}\abs{RW}$.

This coupling was used by \citet{Chatterjee2006} to obtain an exponential
approximation for the spectrum of the Bernoulli--Laplace Markov chain. In order
to obtain optimal rates, \citet{Chatterjee2006} develop more application specific
theorems than ours.

\subsubsection{Conditional distribution of $W$ given $E^c$}\label{sec5}

Let $E$ be an event and let $p=\mathbh{P}[E]$, where $p$ is small. Assume
that $W'$ and $Y$ are defined on the same probability space and that $\mathscr{L}(W') =
\mathscr{L}(W|E^c)$ and $\mathscr{L}(Y) = \mathscr{L}(W|E)$. Then, for any Lipschitz $f$ with
$f(0)=0$, and with $G = (1-p)/p$,
\begin{eqnarray*}
    &&\mathbh{E}\bklg{G f(W') - G f(W)}
    \\
    && \qquad = \frac{1-p}{p}\mathbh{E} f(W') - \frac{1}{p}\mathbh{E} f(W) + \mathbh{E} f(W)
    \\
    && \qquad = \frac{1-p}{p}\mathbh{E} f(W') - \frac{1-p}{p}\mathbh{E}(f(W)|E^c) -
        \mathbh{E}(f(W)|E)+ \mathbh{E} f(W)
        \\
    && \qquad = \frac{1-p}{p}\mathbh{E} f(W') - \frac{1-p}{p}\mathbh{E} f(W') -
        \mathbh{E} f(Y) + \mathbh{E} f(W)
        \\
    && \qquad = \mathbh{E} f(W) - \mathbh{E}\klg{Y f'(UY)},
\end{eqnarray*}
so that $r_1(\%F_{\mathrm{W}}) \leq \mathbh{E} Y$. This coupling is  used by
\citet{Pekoz1996} for geometric approximation in total variation. The Stein
operator used there is a discrete version of the Stein operator used in this
article. Clearly, one will typically aim for an event $E \supset\{W=0\}$ in
order to have $Y = 0$.

\begin{remark}
The roles of $W$ and $W'$ from the previous coupling can be reversed.
Let $E$ and $p$ be as before. However, assume now that $\mathscr{L}(W) = \mathscr{L}(W'|E^c)$
and
$\mathscr{L}(Y) = \mathscr{L}(W'|E)$. Then, it is again straightforward to see that
$(W,W',-1/p)$ is
a constant Stein coupling.
\end{remark}

\section{Applications} \label{sec7}

\subsection{Random sums}\label{sec8}
A classical result of \citet{Renyi1957} 
states that $\mathscr{L}(p\times\break \sum_{i=1}^N X_i)\rightarrow \operatorname{Exp}(1)$ as $p\rightarrow 0$
when $N$ has the $\operatorname{Ge}(p)$ distribution (independent of all else) and $X_i$ are
i.i.d.~with $\mathbh{E} X_i=1$. There have been some
generalizations [see \citet{Brown1990}, \citet{Kalashnikov1997} and the references
therein].
\citet{Sugakova1995}, in particular, gives uniform error bounds for independent
but nonidentically distributed summands with identical means.  Our next result
can be viewed as generalizing this to dependent summands and to
nongeometric~$N$.
 For a random variable~$X$, we denote by $F_X$ its
distribution function and by $F_X^{-1}$ its generalized inverse. We adopt the
standard convention that $\sum_a^b  = 0$ if~$b<a$.

\begin{theorem}\label{thm3} Let
$X=(X_1,X_2,\dots)$ be a sequence of square integrable, nonnegative random
variables, independent of all else, such that, for all $i\geq 1$,
\begin{equation}\label{12}
    \mathbh{E}(X_i|X_1,\dots,X_{i-1}) = \mu_i<\infty\qquad\mbox{almost surely.}
\end{equation}
Let $N$ be a positive, integer valued random variable with $\mathbh{E} N
<\infty$ and let $M$ be a random variable satisfying
\begin{equation}\label{13}
  P(M=m)=\mu_mP(N\geq m)/\mu,\qquad  m=1,2,\ldots
\end{equation}
with
\begin{eqnarray*}
  \mu= \mathbh{E} \sum_{i=1}^N X_i= \sum_{m\geq 1} \mu_mP(N\geq m).
\end{eqnarray*}
Then, with  $W = \mu^{-1}\sum_{i=1}^N X_i$, we have
\begin{eqnarray}  \label{14}
    d_{\mathrm{W}}\bklr{\mathscr{L}(W),\operatorname{Exp}(1)}
    \leq  2\mu^{-1}\Bigl({\mathbh{E}\abs{X_M-X_M^e}
      + \sup_{i\geq 1}\mu_i   \mathbh{E}\abs{N - M}}\Bigr),
\end{eqnarray}
where each $X_i^e$ is a random variable having the equilibrium distribution
w.r.t.\ $X_i$ given $X_1,\dots,X_{i-1}$. If, in addition, $X_i\leq C$ for all
$i$ and $\abs{N - M}\leq K$, then
\begin{equation} \label{15}
    d_{\mathrm{K}}\bklr{\mathscr{L}(W),\operatorname{Exp}(1)} \leq
      12 \mu^{-1}\Bigl\{{\sup_{i\geq1} \norm{F_{X_i}^{-1}-F_{X_i^e}^{-1}}
            + CK}\Bigr\};
\end{equation}
if $K=0$, the same bound also holds for unbounded $X_i$.
\end{theorem}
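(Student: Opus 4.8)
The strategy is to realize $W = \mu^{-1}\sum_{i=1}^N X_i$ together with a companion variable $W^e$ having the equilibrium distribution w.r.t.\ $W$, constructed on the same probability space, and then apply Theorem~\ref{thm1}. Specifically, since $\IE W = 1$, bound \eq{7} gives $\dw(\law(W),\Exp(1)) \leq 2\IE\abs{W^e - W}$, and \eq{5} (together with \eq{2} not being needed here) gives the Kolmogorov bound once we control $\abs{W^e-W}$ almost surely. So everything reduces to: (i) constructing a good coupling of $W$ and $W^e$, and (ii) estimating $\IE\abs{W^e-W}$ (resp.\ $\norm{W^e-W}_\infty$) by the right-hand sides of \eq{14} and \eq{15}.

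\textbf{Constructing $W^e$.} The natural construction proceeds via size-biasing, as in Section~\ref{sec3}: if $W^s$ has the size-bias distribution of $W$ and $U\sim\U[0,1]$ is independent, then $W^e := UW^s$ works. The point is that size-biasing a random sum $\sum_{i=1}^N X_i$ with a martingale-type offspring structure has an explicit description: one picks the index $M$ according to \eq{13} (which is exactly the size-bias distribution of the index weighted by the conditional means $\mu_m$ and the event $\{N\geq m\}$), replaces the $M$-th summand $X_M$ by its (conditional) size-biased version $X_M^s$, and leaves the summation running to $N$ — but because $N$ and $M$ are now coupled differently, one must be careful. I would write $\mu W^s = \sum_{i=1}^{M-1} X_i + X_M^s + (\text{remaining terms up to } N)$, using \eq{12} to check the size-bias identity $\IE\{Wf(W)\} = \IE f(W^s)$ term by term. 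Then $W^e = UW^s$ and the difference decomposes as
\be
    W^e - W = \mu^{-1}\Bigl( U X_M^s - X_M + \ssum_{i=M+1}^{N} X_i - \ssum_{i \text{ between}} \cdots \Bigr),
\ee
which after rearranging splits into a term comparing $UX_M^s$ with $X_M$ — this is exactly $X_M^e - X_M$ in distribution, giving the $\IE\abs{X_M - X_M^e}$ contribution — and a telescoping term accounting for the summands between index $M$ and index $N$, whose expected absolute value is bounded by $\sup_i \mu_i \cdot \IE\abs{N-M}$ after taking conditional expectations of the $X_i$ via \eq{12}.

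\textbf{The Kolmogorov bound and the main obstacle.} For \eq{15}, under $X_i\leq C$ and $\abs{N-M}\leq K$, the telescoping term is bounded by $CK/\mu$ deterministically, and the size-bias-vs-original term is handled by coupling $X_M$ and $X_M^e$ through their quantile functions using the common uniform randomness, giving $\norm{F_{X_i}^{-1} - F_{X_i^e}^{-1}}$; then \eq{5} with $\beta = \norm{W^e-W}_\infty$ (so the probability term vanishes) yields the factor-$12$ bound. When $K=0$ the telescoping term is literally zero, so boundedness of the $X_i$ is no longer needed. \textbf{The step I expect to be the main obstacle} is getting the coupling of $M$ with $N$ and of $X_M^s$ with $X_M$ to interact cleanly: the index $M$ must be generated jointly with the sequence $X$ and with $N$ so that, conditionally on $M=m$, the first $m-1$ variables have their original (conditional) law, $X_m$ is size-biased correctly given $X_1,\dots,X_{m-1}$, and the tail beyond $m$ is consistent — all while simultaneously arranging $\abs{N-M}$ to be as small as the hypotheses allow. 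Verifying the size-bias identity \eq{13} rigorously, i.e.\ that this $M$ together with $X_M^s$ and the leftover summands really reconstitutes $W^s$, is the delicate bookkeeping; once that is in place, the two error terms fall out by the triangle inequality and \eq{12}.
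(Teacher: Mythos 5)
Your overall reduction---construct a close coupling of $W$ with a variable $W^e$ having the equilibrium distribution w.r.t.\ $W$, then invoke \eq{7} for the Wasserstein bound and \eq{5} with a quantile (Strassen-type) coupling of $X_M$ and $X_M^e$ and $\beta$ chosen so that $\IP[\abs{W^e-W}>\beta]=0$ for the Kolmogorov bound---is exactly the intended route. The genuine gap is in the construction of $W^e$ itself. You set $W^e:=UW^s$ as in Section~\ref{sec3}; but there the uniform $U$ multiplies the \emph{entire} size-biased sum, not just the distinguished summand. Size-biasing the random sum indeed singles out an index with law \eq{13} and replaces $X_M$ by $X_M^s$, but it keeps the other summands (and the upper limit is not $N$ but a variable with law $\law(N\mid N\geq M)$), so $\mu\abs{UW^s-W}$ contains the term $(1-U)\sum_{i=1}^{M-1}X_i$ and, since $U$ is independent, $\IE\abs{UW^s-W}$ stays of order one: already in the benchmark case of i.i.d.\ summands with $N\sim\Ge(p)$ one has $\IE\abs{UW^s-W}\approx\IE(1-U)\,\IE W=1/2$, while the right-hand side of \eq{14} is $\bigo(p)$. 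Hence the decomposition you then write, in which $U$ acts only on $X_M^s$ and the remaining discrepancy telescopes between indices $M$ and $N$, does not follow from your definition of $W^e$; it describes a different random variable.

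That different variable is $W^e:=\mu^{-1}\bklr{\sum_{i=1}^{M-1}X_i+X_M^e}$ with $M$ distributed as \eq{13} and independent of the sequence: the sum is cut at $M$, only the $M$-th summand is replaced by its conditional equilibrium transform, there is no global factor $U$ and no summand beyond $M$. That this variable has the equilibrium distribution w.r.t.\ $W$ is the crux of the theorem and must be verified directly from the characterization \eq{3}; the paper does this by combining, for each $m$, the conditional identity $\IE f'\klr{\mu^{-1}\sum_{i<m}X_i+\mu^{-1}X_m^e}=(\mu/\mu_m)\,\IE\bkle{f\klr{\mu^{-1}\sum_{i\leq m}X_i}-f\klr{\mu^{-1}\sum_{i<m}X_i}}$ (which uses \eq{12}) with the summation-by-parts identity $\mu\,\IE\bklg{(g(M)-g(M-1))/\mu_M}=\sum_{m\geq1}P(N\geq m)(g(m)-g(m-1))=\IE g(N)$, so that telescoping over $m$ yields $\IE f'(W^e)=\IE f(W)$. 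This is precisely the ``delicate bookkeeping'' you flag as the main obstacle but do not carry out, and it cannot be bypassed by the $UW^s$ identity; once this lemma is in place, your estimates leading to \eq{14} and \eq{15} (including the case $K=0$ for unbounded $X_i$) go through essentially as you describe.
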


\begin{pf}
We first show that $W^e :=
\mu^{-1}\bklr{\sum_{i=1}^{M-1} X_i + X_M^e}$ has the equilibrium distribution
w.r.t.~$W$. For a given Lipschitz $f$, we write $g(m)=f(\mu^{-1}\sum_{i=1}^m X_i)$ and we have
\begin{eqnarray*}
    \mu\mathbh{E}\biggl[{\frac{g(M)}{\mu_M} - \frac{g(M-1)}{\mu_M}}\biggr]
    = \sum_{m\geq 0} P(N\geq m) \bigl( g(m) - g(m-1)\bigr) = \mathbh{E} g(N)
\end{eqnarray*} and, for any integer $m>0$,
\begin{eqnarray*}
    \mathbh{E} f'\Biggl({\mu^{-1}\sum_{i=1}^{m-1} X_i +\mu^{-1}X_m^e}\Biggr)
    =  \frac{\mu}{\mu_m}\mathbh{E}\bbbkle{ g(m)-g(m-1)}
\end{eqnarray*}
[using \eqref{3}, \eqref{12} and the assumptions on $X_i^e$]
 that together give $
    \mathbh{E} f'(W^e)= \mathbh{E} f(W).$
Then using
\begin{equation}\label{16}
    W^e - W = \mu^{-1}\Biggl\{{(X_M^e-X_M)
    +\operatorname{sgn}(M-N)\sum_{i=(M\wedge N)+1}^{N\vee M} X_i}\Biggr\}
\end{equation}
we obtain \eqref{14} from \eqref{7}.  Letting
$\beta = \mu^{-1}\bklg{\sup_{i\geq1} \norm{F_{X_i}^{-1}-F_{X_i^e}^{-1}} +
CK}$, and using
 Strassen's theorem we obtain \eqref{15} from \eqref{5};  the
remark after \eqref{15} follows similarly.
\end{pf}

\begin{remark}\label{rem1} Let $N\sim\operatorname{Ge}(p)$ and assume that
the $\mu_i$ are bounded from above and bounded away from $0$. This implies
in particular that $\mu \asymp 1/p$ as $p\to0$. Using
\begin{equation} \label{17}
  d_{\mathrm{W}}\bklr{\mathscr{L}(N),\mathscr{L}(M)} = \inf_{(N,M)}\mathbh{E}\abs{N-M}
\end{equation}
from \citet{Kantorovic1958} [see also
\citet{Vallender1973}],
where the infimum ranges over all possible couplings of $N$ and $M$,  we can replace $\mathbh{E}\abs{N-M}$ in
\eqref{14} by the left-hand side of \eqref{17}. To bound this quantity note first
that from \eqref{13} we have $\mathbh{E} h(M) = \mathbh{E} \klg{\frac{\mu_{N}}{\mu p}h(N)}$
for every function~$h$ for which the expectations exist. Note also that
$\mathbh{E}(\mu_N) = \mu p$.
Let $h$ now be Lipschitz with Lipschitz constant $1$ and assume without loss of
generality that $h(0) = 0$, so that $\abs{h(N)}\leq N$.
Then
\begin{eqnarray*}
  \abs{\mathbh{E}\klg{h(M) - h(N)}}
    & = & \biggl|{\mathbh{E}\biggl\{{\biggl({\frac{\mu_N}{\mu p}-1}\biggr)h(N)}\biggr\}}\biggr|
    \leq \mathbh{E}\biggl|{\biggl\{{\frac{\mu_N}{\mu p}-1}\biggr\} N}\biggr|
    \\
    & \leq &\frac{\sqrt{\operatorname{Var}(\mu_N)\mathbh{E} N^2}}{\mu p}
    \leq \frac{\sqrt{2\operatorname{Var}(\mu_N)}}{\mu p^2}.
\end{eqnarray*}
Hence, under the assumptions of this remark, $\mu^{-1}\sup_i \mu_i\mathbh{E}\abs{N-M}$
 is at most of order $\operatorname{Var}(\mu_N)$ as $p\to0$.
\end{remark}

Next, we have an immediate corollary by coupling stochastically ordered random variables.

\begin{corollary}
In the setting in Theorem~\ref{thm3},  assume either $N \leq_{\mathrm{st}} M$ or $N
\geq_{\mathrm{st}} M$ holds as well as that the $X_i$ are independent and, for each $i$,
we have  $\mathbh{E} X_i=1$ and either $X_i \leq_{\mathrm{st}} X_i^e$ or $X_i \geq_{\mathrm{st}} X_i^e$.
Then
\begin{equation}\label{23}
    d_{\mathrm{W}}\bklr{\mathscr{L}(W),\operatorname{Exp}(1)}
    \leq  2\mu^{-1}\sup_{i\geq 1}
    \biggl|{{\frac{1}{2}}\mathbh{E} X_i^2 - 1}\biggr| +
    2\biggl|{\frac{\mathbh{E} N^2 }{2\mu^2} +\frac{1}{2\mu}- 1}\biggr|
\end{equation}
and, furthermore, if $N$ has a $\operatorname{Ge}(p)$ distribution then
\begin{eqnarray}
\label{27}
    &d_{\mathrm{K}}\bklr{\mathscr{L}(W),\operatorname{Exp}(1)}
    \leq  2.47\biggl(p \displaystyle\sup_{i\geq 1} \biggl|\dfrac{1}{2} \mathbh{E} X_i^2
        - 1\biggr|\biggr)^{1/2}.
\end{eqnarray}
\end{corollary}

\begin{remark}
  \label{20}
A nonnegative random variable $X$ with finite mean is said to be NBUE (new
better than used in expectation) if $X^e \leq_{\mathrm{st}} X$ or NWUE (new worse than
used in expectation) if $X^e \geq_{\mathrm{st}} X$ [see \citet{Shaked2007} and
\citet{Sengupta1995} for other sufficient conditions].  A result similar to
\eqref{27} appears as Theorem~6.1 in \citet{Brown1984} with a larger constant,
though \citet{Brown1990} and \citet{Daley1988} subsequently derived significant
improvements.
\end{remark}

\begin{example}[(Geometric convolution of i.i.d. random variables)] Assume that
$N\sim\operatorname{Ge}(p)$ and that $\mathbh{E} X_1 = 1$. Since
$\mathscr{L}(M) = \mathscr{L}(N),$ we can set $M=N$. Denote by $\delta(\%F)$ the
distance between $\mathscr{L}(X_1)$ and $\operatorname{Exp}(1)$ as defined in \eqref{1} with respect to
the set of test functions $\%F$; define $\delta^e(\%F)$ analogously but between
$\mathscr{L}(X)$ and $\mathscr{L}(X^e)$.
In this case, the estimates of Theorem~\ref{thm3} reduce to
\begin{eqnarray}   \label{25}
    d_{\mathrm{W}}\bklr{\mathscr{L}(W),\operatorname{Exp}(1)} & \leq&
        2 p \delta^e(\%F_{\mathrm{W}}),
        \\\label{26}
    d_{\mathrm{K}}\bklr{\mathscr{L}(W),\operatorname{Exp}(1)} & \leq&
        12 p \norm{F_{X_1}^{-1}-F_{X_1^e}^{-1}}
\end{eqnarray}
which can be compared with the (slightly
simplified)
\begin{equation} \label{26b}
    d_{\mathrm{W}}\bklr{\mathscr{L}(W),\operatorname{Exp}(1)} \leq p\delta(\%F_{\mathrm{W}})
      +2p\delta(\%F_2),
\end{equation}
where
\begin{eqnarray*}
    \%F_2 = \bklg{f\in C^{1}(\mathbh{R})\vert f'\in \%F_{\mathrm{W}}},
\end{eqnarray*}
from \citet{Kalashnikov1997}, Theorem~3.1 for $s=2$, page~151.

Noting
  $\delta(\%F_{\mathrm{W}}) \leq 2 \delta^e(\%F_{\mathrm{W}})$ (using the Kantorovich--Rubinstein theorem),
let $Z\sim\operatorname{Exp}(1)$ and let $h$ be a differentiable function with $h(0) = 0$.
Then, recalling that $\mathscr{L}(Z^e)=\mathscr{L}(Z)$, we have from \eqref{3} that $\mathbh{E} h(Z)
= \mathbh{E} h'(Z)$ and, using again \eqref{3} for $X$ and $X^e$,
\begin{equation}\label{26d}
  \mathbh{E} h(X) - \mathbh{E} h(Z) = \mathbh{E} h'(X^e) - \mathbh{E} h'(Z).
\end{equation}
This implies
\begin{equation}\label{26e}
  \delta(\%F_2) = d_{\mathrm{W}}\bklr{\mathscr{L}(X_1^e),\operatorname{Exp}(1)}
\end{equation}
and hence, from \eqref{8}, we have $\delta(\%F_2) \leq
\delta^e(\%F_{\mathrm{W}})$, so that \eqref{26b} gives a bound which is not as good
as \eqref{25} if the bound is to be expressed in terms of
$\delta^e(\%F_{\mathrm{W}})$.

On the other hand, from \eqref{26e} and the triangle inequality,
\begin{eqnarray*}
  \delta^e(\%F_{\mathrm{W}})
    \leq \delta(\%F_{\mathrm{W}})+d_{\mathrm{W}}\bklr{\mathscr{L}(X^e_1),\operatorname{Exp}(1)}
    = \delta(\%F_{\mathrm{W}})+\delta(\%F_2).
\end{eqnarray*}

Hence, although much broader in applicability, our Theorem~\ref{thm3} yields
results
comparable to those in the literature when specialized to the setting of geometric
convolutions.
\end{example}

\begin{theorem}\label{thm4} Let $X=(X_1,X_2,\dots)$ be a sequence of random
variables with $\mathbh{E} X_i = \mu_i$ and $\mathbh{E} X_i^2 <\infty$. Let $N$, $N'$ and
$N''$ be nonnegative, square integrable, integer valued random variables
independent of the sequence
$X$. Assume that
\begin{eqnarray*}\label{28}
   p:=\mathbh{P}[N=0]>0,\qquad \mathscr{L}(N') = \mathscr{L}(N|N>0),
    \qquad  N''\leq N\leq N'.
\end{eqnarray*}
Define $S(k,l) := X_{k+1}+\cdots+X_{l}$ for $k<l$ and $S(k,l) = 0$ for
$k\geq l$. Let $\mu = \mathbh{E} S(0,N)$ and $W = S(0,N)/\mu$. Then,
\begin{eqnarray*}
    &&d_{\mathrm{W}}\bklr{\mathscr{L}(W),\operatorname{Exp}(1)}
    \\
    && \qquad\leq \frac{qs}{p\mu}
    + \frac{4q\mathbh{E}\klg{S(N,N')(1+S(N'',N))}}{p\mu^2}
      + \frac{4\mathbh{E} S(N'',N)}{\mu},
\end{eqnarray*}
where $q=1-p$, $s^2 = \operatorname{Var}\mathbh{E}(S(N,N')|\%F_{N''})$ and $\%F_k :=
\sigma\klr{X_1,\dots,X_k}$.
If, in addition,
\begin{equation}\label{29}
 X_i\leq C,\qquad
 N'-N\leq K_1, \qquad N-N''\leq K_2,
\end{equation}
for positive constants $C$, $K_1$ and $K_2$, then
\begin{equation} \label{30}
    d_{\mathrm{K}}\bklr{\mathscr{L}(W),\operatorname{Exp}(1)}
    \leq \frac{qs}{p\mu} +
        \frac{22 CK_2}{\mu} + \frac{2C^2K_1(11 K_2+6 K_1)}{p\mu^2}.
\end{equation}
\end{theorem}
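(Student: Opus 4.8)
The plan is to apply Theorem~\ref{thm2} with the constant Stein coupling built from the construction in Section~\ref{sec6}. First I would set up the coupling: let $W = S(0,N)/\mu$ and $W' = S(0,N')/\mu$, where $N'$ has the law of $N$ conditioned on $N>0$. By the computation in Section~\ref{sec6} (applied with $E = \{N=0\}$, which forces $Y = 0$ since $N''\leq N$ ensures $S(0,N)=0$ on $\{N=0\}$ is not quite what we want—rather, the summation-from-$0$ convention gives $W'=0$ on the conditioning event only if we also use $Y$ from the $S(0,\cdot)$ structure, but since $N$ starts at $0$ we literally have $Y=0$), we get that $(W,W',G)$ with $G = -1/p$ satisfies $r_1(\%F_{\mathrm{W}}) = r_1(\%F_{\mathrm{BW}}) = 0$. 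Then $D = W'-W = S(N,N')/\mu$ and $D' = W''-W = -S(N'',N)/\mu$ with $W'' = S(0,N'')/\mu$.

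Next I would compute the error terms $r_2, r_3, r_3', r_4, r_4'$ from Theorem~\ref{thm2}. The term $r_2 = \IE|\IE^{W''}(GD)-1|$: here $GD = -S(N,N')/(p\mu)$, and one needs $\IE^{\%F_{N''}}(GD) \approx 1$. Since $\mu = \IE S(0,N)$ and, using $\law(N')=\law(N|N>0)$ together with the telescoping identity $\IE S(0,N') = \IE S(0,N)/(1-p) \cdot(\text{something})$—more precisely $(1-p)\IE S(0,N') = \IE S(0,N) = \mu$ would give $\IE S(N,N') = \IE S(0,N') - \IE S(0,N) = p\mu/(1-p)\cdot(\cdots)$; getting the constant $q/p = (1-p)/p$ to match requires care, but the upshot is that $\IE(GD) = 1$ exactly and $r_2 \leq s q/(p\mu)$ by bounding the conditional-expectation fluctuation by $\sqrt{\Var\IE(S(N,N')|\%F_{N''})} = s$ times the constant $q/(p\mu)$. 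Then $r_4 = \IE|G(D^2\wedge 1)| \leq \IE|GD\cdot D| = \IE[S(N,N')S(N,N')]/(p\mu^2)$ and similarly $r_4' = \IE|(GD-1)(|D'|\wedge 1)| \leq \IE|GD-1|\cdot|D'| \leq (1/p)\IE[S(N,N')]\IE[\cdots]/\mu^2 + \IE[S(N'',N)]/\mu$; combining $r_4$ and $r_4'$ with the cross term $S(N,N')S(N'',N)$ produces the middle summand $4q\IE\{S(N,N')(1+S(N'',N))\}/(p\mu^2)$ and the last summand $4\IE S(N'',N)/\mu$. The truncation terms $r_3, r_3'$ involving $\I[|D|>1]$ and $\I[|D'|>1]$ get absorbed into the $D^2\wedge 1$ and $|D'|\wedge 1$ bounds since $|D|\I[|D|>1]\leq D^2$ and similarly, so they do not contribute separately to the stated Wasserstein bound.

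For the Kolmogorov bound \eq{30}, I would instead use \eq{11} from Theorem~\ref{thm2} with the bounded-difference assumptions \eq{29}: under $X_i\leq C$, $N'-N\leq K_1$, $N-N''\leq K_2$, we have the deterministic bounds $|D| \leq CK_1/\mu$ and $|D'|\leq CK_2/\mu$ and $|G|=1/p$. So I would choose $\alpha$ slightly above $1/p$, $\beta$ slightly above $CK_1/\mu$, $\beta'$ slightly above $CK_2/\mu$ so that $r_5 = r_5' = 0$; then the remaining terms in \eq{11} are $2r_2 + 22(\alpha\beta+1)\beta' + 12\alpha\beta^2$, which with these choices becomes $2sq/(p\mu) + 22(\,CK_1/(p\mu)+1\,)(CK_2/\mu) + 12(1/p)(CK_1/\mu)^2$, matching \eq{30} after expanding: the $22 CK_2/\mu$ term and the $2C^2K_1(11K_2 + 6K_1)/(p\mu^2)$ term.

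The main obstacle I expect is the bookkeeping to verify the exact identity $\IE(GD)=1$ and to pin down $r_2$: one must carefully use $\law(N')=\law(N|N>0)$, independence of the $X$-sequence from $(N,N',N'')$, and the conditioning $\%F_{N''}=\sigma(X_1,\dots,X_{N''})$ to see that the conditional mean of $S(N,N')$ given $\%F_{N''}$ has the right first moment, so that only its variance $s^2$ survives in the bound. The other genuine subtlety is making sure the $S(N,N')$-versus-$S(N'',N)$ cross terms (and the $|D'|\wedge 1 \leq |D'|$ crude bound) collapse to exactly the constants $4$ and $4q/p$ claimed, rather than something slightly larger—this is where one has to combine $r_3,r_3',r_4,r_4'$ thoughtfully rather than bounding each in isolation. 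Everything else is routine substitution into Theorem~\ref{thm2}.
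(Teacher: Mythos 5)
Your overall architecture is the right one and matches the paper's: condition on the event $E=\{N=0\}$ to build a constant Stein coupling with $Y=0$, apply Theorem~\ref{thm2}, bound $r_2$ by the standard deviation $s$ of $\IE(S(N,N')\,|\,\%F_{N''})$ using $\IE(GD)=1$, and for the Kolmogorov bound pick $\alpha,\beta,\beta'$ from the deterministic bounds in \eqref{29} so that $r_5=r_5'=0$. However, there is a genuine error in the coupling you invoke. You use the construction of Section~\ref{sec6} with $G=-1/p$, but the hypotheses of that construction are $\law(W)=\law(W'\,|\,E^c)$, which is not what holds here; what holds is $\law(W')=\law(W\,|\,E^c)$ (since $\law(N')=\law(N\,|\,N>0)$ and $X$ is independent of $N,N'$), i.e.\ the setting of Section~\ref{sec5}, whose $G$ is $(1-p)/p=q/p$. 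With your $G=-1/p$ the key identities fail: for $f$ with $f(0)=0$ one gets $\IE\{Gf(W')-Gf(W)\}=-\tfrac1p\IE f(W')+\tfrac1p\IE f(W)=-\IE f(W')$, so $r_1$ is of order one rather than $0$, and $\IE(GD)=-\tfrac{1}{p\mu}\IE S(N,N')=-1/q$ rather than $1$, so the variance bound for $r_2$ collapses as well. Indeed your own later constants betray the mismatch: the factor $q/(p\mu)$ in your $r_2$ bound is the one produced by $G=q/p$, not by $G=-1/p$ (your Kolmogorov bookkeeping with $|G|=1/p$ happens to still give the stated bound only because $q\leq 1$). The fix is simply to use the Section~\ref{sec5} coupling, $G=(1-p)/p$, with $W'=\mu^{-1}S(0,N')$, $W''=\mu^{-1}S(0,N'')$, $Y=0$; then $r_1(\%F_{\mathrm{W}})=r_1(\%F_{\mathrm{BW}})=0$, and $\IE(GD)=1$ follows at once from \eqref{9} applied to $f(x)=x$ (no telescoping first-moment computation is needed, which is where your sketch gets muddled).

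With that correction, the remainder of your outline is essentially the paper's proof: $r_2=\IE\babs{1-q(p\mu)^{-1}\IE(S(N,N')\,|\,\%F_{N''})}\leq qs/(p\mu)$; the Wasserstein bound follows from \eqref{10} with crude estimates combining $r_3+r_4$ and $r_3'+r_4'$ (the paper uses $r_3+r_4\leq\IE(GD^2)$ and $r_3'+r_4'\leq 2\IE\abs{D'}+2\IE(GD\abs{D'})$, so your worry about the cross terms is resolved by exactly this grouping rather than bounding each term separately); and the Kolmogorov bound follows from \eqref{11} with $\alpha=q/p$, $\beta=CK_1/\mu$, $\beta'=CK_2/\mu$, which makes $r_5=r_5'=0$ and yields \eqref{30} upon expanding $22(\alpha\beta+1)\beta'+12\alpha\beta^2$.
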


\begin{pf} We make use of the coupling construction from Section~\ref{sec5}.
Let $E = \klg{N=0}$, let $Y = 0$, let $W' = \mu^{-1}\sum_{i=1}^{N'} X_i$ and
likewise $W'' = \mu^{-1}\sum_{i=1}^{N''} X_i$. Then the conditions of
Section~\ref{sec5} are satisfied with $G = q/p$ and we can apply
Theorem~\ref{thm2}, in particular \eqref{11}. We have $r_1(\%F_{\mathrm{BW}}) = 0$
as proved in Section~\ref{sec5}. Note now that $D = S(N,N')$ and $D' =
S(N'',N)$. Hence, $r_2 =\mathbh{E}\abs{1-q(p\mu)^{-1}\mathbh{E}(S(N,N')|\%F_{N''})}$. As \eqref{9}
implies that $\mathbh{E}(GD) = \mathbh{E} W = 1$, the variance bound of $r_2$ follows. The
$d_{\mathrm{W}}$-bound follows from~\eqref{10}, using the rough estimates
$r_3+r_4\leq\mathbh{E}\abs{GD^2}$ and $r_3'+r_4'\leq 2\mathbh{E} \abs{D'} + 2\mathbh{E}\abs{GDD'}$ as
we assume
bounded second moments. To obtain the $d_{\mathrm{K}}$-bound choose $\alpha = G = q/p$,
$\beta = CK_1/\mu$ and $\beta' = C K_2/\mu$; then $r_5 = r_5' = 0$. Hence,
\eqref{11} yields
\begin{eqnarray*}
    d_{\mathrm{K}}\bklr{\mathscr{L}(W),\operatorname{Exp}(1)}\leq r_2 + 22(\alpha\beta+1)\beta'
        + 12\alpha\beta^2.
\end{eqnarray*}
Plugging in the value for $r_2$ and the constants, the theorem is proved.
\end{pf}

\begin{example}[(Geometric convolution under local dependence)] If
$N+1\sim\operatorname{Ge}(p)$ (that is, $N$ is a geometric distribution starting at $0$) we
can
choose $N' = N+1$, as $\mathscr{L}(N|N>0) = \mathscr{L}(N+1)$ due to
the well-known lack-of-memory property; hence $K_1 = 1$. Assume now there is a
nonnegative integer $m$ such that, for each $i$, $(X_1,\dots,X_{i})$ is
independent of $(X_{i+m+1},X_{i+m+2},\dots)$. We can set $N'' = \max(N - m, 0)$,
hence $s^2 \leq \operatorname{Var} \mu_{N+1}$, where $\mu_i := \mathbh{E} X_i$. Assume also that
$\mu_i\geq \mu_0$ for some $\mu_0>0$, so that
$\mu\geq \mu_0/p $. Hence, Theorem~\ref{thm4} yields
\begin{eqnarray*}
    d_{\mathrm{K}}\bklr{\mathscr{L}(W),\operatorname{Exp}(1)}
    \leq \frac{\sqrt{\operatorname{Var}(\mu_{N+1})}}{\mu_0} +
        \frac{22Cpm}{\mu_0} + \frac{2C^2p(11 m+6)}{\mu_0^2}.
\end{eqnarray*}
Again, convergence is obtained if $\operatorname{Var}(\mu_{N+1})\to0$ as
$p\to0$; cf. Remark~\ref{rem1}. 
\end{example}

\subsection{First passage times} \label{sec432}

Approximately exponential hitting times for Markov chains have been widely
studied; see \citet{Aldous1989}, \citet{Aldous1992} and \citet{Aldous1993a} for
entry points to this literature.   Let $X_0,X_1,\dots$ be a stationary ergodic
Markov chain with a countable state space  $\%X$, transition probability matrix
$P=(P_{i,j})_{i,j\in\%X}$ and stationary distribution $\pi=(\pi_i)_{i\in\%X}$
and let
\begin{eqnarray*}
   \mathscr{L}( T_{\pi,i}) =\mathscr{L}(\inf\{t\geq 0\dvtx X_t=i\}) \qquad \mbox{starting with $\mathscr{L}(X_0)=\pi$}
\end{eqnarray*}
be the
hitting time on state $i$  started according to the
stationary distribution~$\pi$ and let
\begin{eqnarray*}
   \mathscr{L}( T_{i,j})=\mathscr{L}(\inf\{t>0\dvtx X_t=j\}) \qquad \mbox{starting with $X_0=i$}
\end{eqnarray*}
be the hitting time on state $j$ starting from state $i$.
We also say  a stopping time $T_{i,\pi}$ is a  stationary time starting from
state $i$ if $\mathscr{L} (X_{T_{i,\pi}}|   X_0=i ) =\pi.$

\begin{corollary} \label{corr}
With the above definitions, we have
\begin{equation} \label{32b}
\qquad d_{\mathrm{K}}\bklr{\mathscr{L}(\pi_i T_{\pi,i}),\operatorname{Exp}(1)}\leq 2\pi_i
  + \min\bklg{\pi_i\mathbh{E}|T_{\pi,i}-T_{i,i}|,\mathbh{P} (T_{\pi,i}\neq T_{i,i})}.
\end{equation}
\end{corollary}

\begin{pf}
Using a renewal argument to obtain $\mathbh{P} (T_{\pi,i}=k) = \pi_i \mathbh{P} (T_{i,i}> k)$,
 it is then straightforward to see that $\mathscr{L} (T_{i,i}^e)= \mathscr{L} (T_{\pi,i}+U)$
when $U$ is a uniform random variable on $[0,1]$, independent of all else:
with $f(0)=0$ and using  \eqref{3} we have
\begin{eqnarray*}
  \mathbh{E} f'(T_{\pi,i}+U) &=& \mathbh{E} f(T_{\pi,i}+1)-f(T_{\pi,i})
  \\[-2pt]
   &=& \pi_i\sum_k  \mathbh{P}(T_{i,i}> k) \bigl(f(k+1)-f(k)\bigr)
   \\[-2pt]
   &=& \pi_i\sum_k \sum_{j> k}  \mathbh{P}(T_{i,i}=j)\bigl(f(k+1)-f(k)\bigr)
    \\[-2pt]
   &=& \pi_i\sum_j \sum_{0\leq k< j}  \mathbh{P}(T_{i,i}=j)\bigl(f(k+1)-f(k)\bigr)
   \\[-2pt]
   &=& \pi_i\sum_j   \mathbh{P}(T_{i,i}=j)f(j)
   = \pi_i\mathbh{E}  f(T_{i,i}).
\end{eqnarray*}
We then have
\begin{eqnarray}\label{34}
    d_{\mathrm{K}}\bklr{\mathscr{L}(\pi_i T_{\pi,i}),\operatorname{Exp}(1)}
    & \leq& \pi_i + d_{\mathrm{K}} \bigl(\mathscr{L}\bigl(\pi_i
    (T_{\pi,i}+U)\bigr),\operatorname{Exp}(1)\bigr)\nonumber
    \\[-8pt]\\[-8pt]
    & =& \pi_i + d_{\mathrm{K}} (\mathscr{L}(\pi_i
    T_{i,i}^e),\operatorname{Exp}(1)),\nonumber
\end{eqnarray}
where we use $d_{\mathrm{K}} (\mathscr{L}(T_{\pi,i}),\mathscr{L}(T_{\pi,i}+U))\leq \pi_i$ in the first
line and $\mathscr{L} (T_{i,i}^e)= \mathscr{L} (T_{\pi,i}+U)$ in the second line.
We obtain
inequality \eqref{32b} from \eqref{34} and~\eqref{8}, and then using
\[
\mathbh{E}
|T_{\pi,i}+U-T_{i,i}| \leq \mathbh{E} U + \mathbh{E} |T_{\pi,i}-T_{i,i}| \leq 0.5+ \mathbh{E}
|T_{\pi,i}-T_{i,i}|
\]
  along with \eqref{34} and \eqref{6} using
$\beta=\pi_i$ (since $\{\abs{T_{\pi,i}+U-T_{i,i}}>1\}$ implies $\{T_{\pi,i}\neq
T_{i,i}\}$).
\end{pf}

Below, whenever $T_{i,i}$
 and $ T_{i,\pi}$ are used together in an expression it assumed they are both
based on a single copy of the Markov chain.

\begin{corollary}\label{cor}
With the above definitions and $\rho = \mathbh{P}[T_{i,i}<T_{i,\pi}]$,
\begin{eqnarray}  \label{35b}
&&d_{\mathrm{K}}\bklr{\mathscr{L}(\pi_i T_{\pi,i}),\operatorname{Exp}(1)}\nonumber
\\[-8pt]\\[-8pt]
&&\qquad\leq 2\pi_i+\min\Biggl\{{\pi_i \Bigl(\mathbh{E} T_{i,\pi} + \rho\sup_j \mathbh{E}
T_{j,i}\Bigr),
    \sum_{n=1}^\infty \bigl|P_{i,i}^{(n)}-\pi_i\bigr|}\Biggr\}.\nonumber
\end{eqnarray}
\end{corollary}

\begin{pf}
Letting $X_0=i$, $T_{\pi,i} = \inf\{t\geq 0\dvtx X_{T_{i,\pi}+t}=i\}$,
$T_{i,i} = \inf\{t> 0\dvtx X_{t}=i\}$ and $A=\{T_{i,i}<T_{i,\pi}\}$ we have
\begin{eqnarray*}
    \abs{T_{\pi,i}-T_{i,i}}
    \leq (T_{\pi,i}+T_{i,\pi})I_A+T_{i,\pi}I_{A^c}
    \leq T_{i,\pi}+ T_{\pi,i}I_{A}
\end{eqnarray*}
and the first argument in the minimum of \eqref{35b} follows from \eqref{32b} after
noting $\mathbh{E} [T_{\pi,i} |  A]\leq \sup_j \mathbh{E} T_{j,i}$.

For the second argument in the minimum, let  $X_1,X_2,\dots$ be the stationary Markov chain and let
$Y_0,Y_1,\dots$ be a coupled copy of the Markov chain started in state $i$ at
time 0, but let $Y_1, Y_2,\ldots$ be coupled with $X_1,X_2,\dots$ according to
the maximal coupling of \citet{Griffeath1974} so that we have $\mathbh{P}
(X_n=Y_n=i)=\pi_i\wedge P_{i,i}^{(n)}$.  Let $ T_{\pi,i}$ and $T_{i,i}$ be
hitting times respectively defined on these two Markov chains.  Then
\begin{eqnarray*}
  \mathbh{P} (T_{\pi,i}\neq T_{i,i})
     \leq \sum_n \mathbh{P} (X_n=i,Y_n\neq i)+\mathbh{P} (Y_n=i,X_n\neq i)
\end{eqnarray*}
and since
\begin{eqnarray*}
  \mathbh{P} (X_n=i,Y_n\neq i)
    & =& \pi_i - \mathbh{P} (X_n=i,Y_n= i)
    \\
    & =& \pi_i-\pi_i\wedge P_{i,i}^{(n)}
    \\
    & =& [\pi_i - P_{i,i}^{(n)}]^+,
\end{eqnarray*}
and a similar calculation yields $\mathbh{P} (Y_n=i,X_n\neq i) = [ P_{i,i}^{(n)}
-\pi_i]^+$,
and then we obtain \eqref{35b}.
\end{pf}

\begin{example}
With the above definitions and further assuming $X_n$ is an $m$-dependent
Markov chain, we can let $T_{i,\pi}=m$ and we thus have
\begin{eqnarray*}
  &&d_{\mathrm{K}}\bklr{\mathscr{L}(\pi_i T_{\pi,i}),\operatorname{Exp}(1)}
  \\
  &&\qquad\leq 2\pi_i +
    \min\Biggl\{{\pi_i\Bigl(m+\mathbh{P} (T_{i,i}< m)\sup_j \mathbh{E} T_{j,i}\Bigr),
      \sum_{n=1}^{m-1} \bigl|P_{i,i}^{(n)}-\pi_i\bigr|}\Biggr\}.
\end{eqnarray*}
If we consider flipping a biased coin repeatedly, let $T$ be the number of flips
required until the beginning of a given pattern (that cannot overlap with
itself) of heads and tails of
length $k$ first appears as a run. The current run of $k$ flips can be encoded
in the state space of a $k$-dependent Markov chain
and then applying the second result above we
obtain
\begin{eqnarray*}
    d_{\mathrm{K}}\bklr{\mathscr{L}(\pi_i  T_{\pi,i}),\operatorname{Exp}(1)} \leq \pi_i
    (k+1).
\end{eqnarray*}
Using the ``de-clumping'' trick of counting the flips $T$ preceding the first
appearance of tails followed by $k$ heads in row
we have
\begin{eqnarray*}
    d_{\mathrm{K}}\bklr{\mathscr{L}(qp^kT_{\pi,i}),\operatorname{Exp}(1)}\leq
    (k+2)p^k,
\end{eqnarray*}
where $p=1-q$ is the probability of heads.
Similar results are obtained using Poisson and geometric approximations
respectively in \citeauthor{Barbour1992} [(\citeyear{Barbour1992}), page~164] and \citet{Pekoz1996}.
\end{example}

Recall the definitions of NBUE and NWUE from Remark~\ref{20} and, as discussed
in \citet{Aldous1994}, that stationary reversible continuous-time Markov chain
hitting times are NWUE.  The next results are immediate consequences of
Theorem~\ref{thm1} and \eqref{2}. While \eqref{38} appears to be new, inequality
\eqref{37} appears in \citet{Brown1990}, Lemma 2.3. Inequality \eqref{37a} with a
larger constant of 3.119 appears in \citeauthor{Brown1984} [(\citeyear{Brown1984}), Theorem 3.6] for the NBUE
case and in \citeauthor{Brown1984} [(\citeyear{Brown1984}), equation (5.3)] for the NWUE case; this constant
was later improved in both cases to 1.41 for small $\rho$ in \citet{Daley1988}, equation
(1.7).

\begin{corollary}\label{cor10}
If $W$ is either NBUE or NWUE with $\mathbh{E} W=1,$ finite second moment  and
letting $\rho = |{\frac{1}{2}} {\mathbh{E} W^2 } - 1 |$, we have
\begin{eqnarray}\label{37}
    d_{\mathrm{K}}\bklr{\mathscr{L}(W^e),\operatorname{Exp}(1)}&\leq& \rho,
 \\\label{37a}
    d_{\mathrm{K}}\bklr{\mathscr{L}(W),\operatorname{Exp}(1)}&\leq& 2.47 \rho^{1/2}
\end{eqnarray}
and
\begin{equation}\label{38}
    d_{\mathrm{W}}\bklr{\mathscr{L}(W^e),\operatorname{Exp}(1)}\leq \rho,
    \qquad
    d_{\mathrm{W}}\bklr{\mathscr{L}(W),\operatorname{Exp}(1)}\leq 2\rho.
\end{equation}
\end{corollary}

\subsection{Critical Galton--Watson branching process} \label{sec9}
Let $Z_0=1,Z_1,Z_2,\dots$ be a Galton--Watson branching process with
offspring distribution $\nu=\mathscr{L}(Z_1)$. A theorem due to
\citet{Yaglom1947} states that, if $\mathbh{E} Z_1 = 1$ and
$\operatorname{Var} Z_1 = \sigma^2<\infty$, then $\mathscr{L}(
n^{-1}Z_n |Z_n>0)$ converges to an exponential distribution with mean
$\sigma^2/2$. We give a rate of convergence for this asymptotic under
finite third moment of the offspring distribution using the idea from
Section~\ref{sec3}. Though exponential limits in this context are an
active area of research [see, e.g., \citet{Lalley2009}], the
question of rates does not appear to have been previously studied in
the literature. To this end, we make use the of construction from
\citet{Lyons1995}; we refer to that article for more details on
the construction and only present what is needed for our purpose.

\begin{theorem}\label{thm5} For a critical Galton--Watson branching process with
offspring
distribution $\nu=\mathscr{L}(Z_1)$ such that $\mathbh{E} Z_1^3 < \infty$ we have
\begin{eqnarray*}
    d_{\mathrm{W}}\bigl({\mathscr{L}\bigl(2Z_n/(\sigma^2n)|Z_n>0\bigr),\operatorname{Exp}(1)}\bigr)
  = \mathrm{O}\biggl({\frac{\log n}{n}}\biggr).
\end{eqnarray*}
\end{theorem}

\begin{pf}
First, we construct a size-biased branching tree as in \citet{Lyons1995}. We
assume that this tree is labeled and ordered, in the sense that, if $w$ and $v$
are vertices in the tree from the same generation and $w$ is to the left of $v$,
then the offspring of $w$ is to the left of the offspring of $v$, too. Start in
generation $0$ with one vertex $v_0$ and let it have a number of offspring
distributed according to the size-bias distribution of $\nu$. Pick one of the
offspring of $v_0$ uniformly at random and call it $v_1$. To each of the
siblings of $v_1$, attach an independent Galton--Watson branching process with
offspring distribution $\nu$. For $v_1$ proceed as for $v_0$, that is, give it a
size-biased number of offspring, pick one at uniformly at random, call it $v_2$,
attach independent Galton--Watson branching process to the siblings of $v_2$ and
so on. It is clear that this will always give an infinite tree as the ``spine''
$v_0,v_1,v_2,\dots$ of the tree will never die out.

We next need some notation. Denote by $S_n$ the total number of particles in
generation $n$. Denote by $L_n$ and $R_n$, respectively, the number of particles
to the left (exclusive $v_n$) and to the right (inclusive $v_n$), respectively,
of vertex $v_n$, so that $S_n = L_n + R_n$. We can describe these particles in
more detail, according to the generation at which they split off from the spine.
Denote by $S_{n,j}$ the number of particles in generation $n$ that stem from any
of the siblings of $v_j$ (but not $v_j$ itself). Clearly, $S_n = 1 +
\sum_{j=1}^n S_{n,j}$, where the summands are independent. Likewise, let
$L_{n,j}$ and $R_{n,j}$, respectively, be the number of particles in generation
$n$ that stem from the siblings to the left and right, respectively, of $v_j$
(note that $L_{n,n}$ and $R_{n,n}$ are just the number of siblings of $v_n$ to
the left and to the right, respectively). We have the relations $L_n =
\sum_{j=1}^n L_{n,j}$ and $R_n = 1 + \sum_{j=1}^n R_{n,j}$. Note that, for
fixed~$j$, $L_{n,j}$ and $R_{n,j}$ are in general not independent, as they are
linked through the offspring size of $v_{j-1}$.

Now let $R_{n,j}'$ be independent random variables such that
\begin{eqnarray*}
    \mathscr{L}(R'_{n,j}) = \mathscr{L}(R_{n,j} | L_{n,j} = 0)
\end{eqnarray*}
and, with $A_{n,j} = \{ L_{n,j}=0\}$, define
\begin{equation} \label{39}
    R_{n,j}^* = R_{n,j} I_{A_{n,j}} + R_{n,j}' I_{A_{n,j}^c}
    = R_{n,j} + (R_{n,j}' - R_{n,j}) I_{A_{n,j}^c}.
\end{equation}
Define also $R_n^* = 1 + \sum_{j=1}^n R_{n,j}^*$. Let us collect a few
facts which we will then use to give the proof of the theorem:
\begin{enumerate}[(vii)]
    \item[(i)]for any nonnegative random variable $X$ the size-biased
distribution of $\mathscr{L}(X)$ is the same as the size-biased
distribution of $\mathscr{L}(X | X>0)$;

    \item[(ii)] $S_n$ has the size-biased distribution of
        $Z_n$;

    \item[(iii)] given $S_n$, the vertex $v_n$ is uniformly distributed
among the particles of the $n$th generation;

    \item[(iv)] $\mathscr{L}(R_n^*) = \mathscr{L}(Z_n | Z_n > 0);$

    \item[(v)] $\mathbh{E} \klg{R_{n,j}'I_{A_{n,j}^c}}\leq
    \sigma^2\mathbh{P}[A_{n,j}^c];$

    \item[(vi)] $\mathbh{E}\klg{R_{n,j} I_{A_{n,j}^c}} \leq \gamma
            \mathbh{P}[A_{n,j}^c]$, where $\gamma = \mathbh{E} Z_1^3$;

    \item[(vii)] $\mathbh{P}[A_{n,j}^c]\leq \sigma^2\mathbh{P}[Z_{n-j}>0]\leq C(\nu) /
(n-j+1)$ for some absolute constant $C(\nu)$.
\end{enumerate}
Statement (i) is easy to verify, (ii) follows from
\citet{Lyons1995}, equation~(2.2), (iii) follows from
\citet{Lyons1995}, comment after (2.2), (iv) follows from
\citet{Lyons1995}, proof of Theorem C(i). Using independence,
\begin{eqnarray*}
    \mathbh{E} \klg{R'_{n,j} I_{A_{n,j}^c}} = \mathbh{E} R'_{n,j} \mathbh{P}[A_{n,j}^c]
    \leq \sigma^2 \mathbh{P}[A_{n,j}^c],
\end{eqnarray*}
where the second inequality is due to \citet{Lyons1995}, proof of
Theorem~C(i),
which proves (v). If
$X_j$ denotes the number of
siblings of $v_j$, having the size bias distribution of $Z_1$ minus $1$, we have
\begin{eqnarray*}
    \mathbh{E}\klg{R_{n,j} I_{A_{n,j}^c}}
    &\leq& \mathbh{E}\klg{X_j I_{A_{n,j}^c}}
    \leq \sum_{k}k\mathbh{P}[X_j=k,A_{n,j}^c]
    \\
    &\leq& \sum_{k}k\mathbh{P}[X_j=k]\mathbh{P}[A_{n,j}^c|X_j=k]
    \\
    &\leq& \sum_{k}k^2\mathbh{P}[X_j=k]\mathbh{P}[A_{n,j}^c]
    \leq \gamma \mathbh{P}[A_{n,j}^c],
\end{eqnarray*}
hence (vi). Finally,
\begin{eqnarray*}
    \mathbh{P}[A_{n,j}^c] = \mathbh{E}\klg{\mathbh{P}[A_{n,j}^c|X_j]}
    \leq \mathbh{E}\klg{X_j\mathbh{P}[Z_{n-j}>0]} \leq \sigma^2\mathbh{P}[Z_{n-j}>0].
\end{eqnarray*}
Using Kolmogorov's estimate [see \citet{Lyons1995}, Theorem~C(i)],
we have $\lim_{n\to\infty}n\mathbh{P}[Z_n>0] = 2/\sigma^2$, which
implies (vii).

We are now in the position to prove the theorem using \eqref{5} of
Theorem~\ref{thm1}. Let $c=2/\sigma^2$. Due to (iv) we can set $W = c
R_n^*/n$. Due to (i) and (ii), $S_n$ has the size bias distribution of
$R_n^*$. Let $U$ be an independent and uniform random variable on $[0,1]$. Now,
$R_n - U$ is a continuous random variable taking values on $[0,S_n]$ and, due to
(iii), has distribution $\mathscr{L}(U S_n)$; hence we can set $W^e = c (R_n - U) /
n$. It remains to bound $\mathbh{E}\abs{W-W^e}$. From \eqref{39} and using (v)--(vii),
we have
\begin{eqnarray*}
    nc^{-1}\mathbh{E}\abs{W-W^e}
    & \leq& \mathbh{E} U + \mathbh{E}\babs{R_n^* - R_n}
    \leq 1+\sum_{j=1}^n \mathbh{E}\klg{R_{n,j}'I_{A_{n,j}^c} +
            R_{n,j}I_{A_{n,j}^c}}
            \\
    & \leq& 1+ C(\nu)\sum_{j=1}^n\frac{\sigma^2+\gamma}{n-j+1}
     \leq 1+C(\nu)(\sigma^2+\gamma)(1+\log n).
\end{eqnarray*}
Hence, for a possibly different constant $C(\nu)$,
\begin{eqnarray*}
    \mathbh{E}\abs{W-W^e} \leq \frac{C(\nu)\log n}{n}.
\end{eqnarray*}
Plugging this into \eqref{7} yields the final bound.
\end{pf}

\section{Proofs of main results} \label{sec10}

Our  results are based on the Stein operator
\begin{equation} \label{654}
  A f(x) = f'(x)-f(x)
\end{equation}
and the corresponding Stein equation
\begin{equation} \label{40}
    f'(w)-f(w) = h(w) - \mathbh{E} h(Z),\qquad w\geq 0
\end{equation}
previously studied (independently of each
other and, in the case of the first two, independent of the present
work) by \citet{Weinberg2005}, \citet{Bon2006} and \citet{Chatterjee2006}.
It is straightforward that the solution $f$ to~\eqref{40} can be  written as
\begin{equation} \label{41}
    f(w) = - e^w \int_w^\infty\bigl(h(x)-\mathbh{E} h(Z)\bigr) e^{-x} \,
    dx.
\end{equation}

We next need some properties of the
solution~\eqref{41}. Some preliminary results can be found in \citet{Weinberg2005},
\citet{Bon2006}, \citet{Chatterjee2006} and \citet{Daly2008}. We give
self-contained proofs of the following bounds.

\begin{lemma}[(Properties of the solution to the Stein equation)]\label{lem1}
Let $f$ be the solution to~\eqref{40}. If $h$ is bounded, we have
\begin{equation}\label{42}
    \norm{f} \leq \norm{h},\qquad \norm{f'}\leq 2\norm{h}.
\end{equation}
If $h$ is Lipschitz, we have
\begin{equation} \label{43}
    \abs{f(w)}\leq (1+w)\norm{h'}, \qquad
    \norm{f'} \leq \norm{h'},\qquad
    \norm{f''}\leq 2\norm{h'}.
\end{equation}
For any $a>0$ and any $\varepsilon>0$, let
\begin{equation}\label{44}
    h_{a,\varepsilon}(x) := \varepsilon^{-1}\int_0^\varepsilon \mathrm{I}[x+s\leq a]\, ds.
\end{equation}
Define $f_{a,\varepsilon}$ as in \eqref{41} with respect to $h_{a,\varepsilon}$. Define
$h_{a,0}(x) =
\mathrm{I}[x\leq a]$ and $f_{a,0}$ accordingly.
Then, for all $\varepsilon\geq0$,
\begin{eqnarray} \label{45}
    \norm{f_{a,\varepsilon}}&\leq& 1,
    \qquad
    \norm{f'_{a,\varepsilon}}\leq 1,
    \\  \label{45b}
    \abs{f_{a,\varepsilon}(w+t)-f_{a,\varepsilon}(w)}&\leq&1,
    \qquad
    \abs{f'_{a,\varepsilon}(w+t)-f'_{a,\varepsilon}(w)}\leq 1
\end{eqnarray}
and, for all $\varepsilon>0$,
\begin{equation}
   \qquad \abs{f'_{a,\varepsilon}(w+t)-f'_{a,\varepsilon}(w)} \leq (\abs{t}\wedge 1) \label{46}
        +\varepsilon^{-1}\int_{t\wedge 0}^{t\vee 0} \mathrm{I}[a-\varepsilon\leq w+u\leq a]\,d u.
\end{equation}
\end{lemma}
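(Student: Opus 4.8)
The plan is to work directly from the explicit representation \eqref{41}, which gives $f(w) = -e^w\int_w^\infty (h(x)-\IE h(Z))e^{-x}\,dx$, together with the identity $\IE h(Z) = \int_0^\infty h(x)e^{-x}\,dx$ (valid since $Z\sim\Exp(1)$). The key preliminary observation is that $f$ satisfies $f'(w)=f(w)+h(w)-\IE h(Z)$, so every bound on $f'$ follows from the corresponding bound on $f$ and on $h$; similarly $f''(w)=f'(w)+h'(w)$ when $h$ is Lipschitz. So the real work is always bounding $f$ itself.

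First I would treat the bounded case \eqref{42}. Writing $f(w) = -e^w\int_w^\infty(h(x)-\IE h(Z))e^{-x}\,dx = \int_0^\infty(\IE h(Z)-h(w+t))e^{-t}\,dt$ (substituting $x=w+t$), the integrand is bounded in absolute value by $2\norm{h}$, but a sharper argument is needed to get the constant $1$ in \eqref{45}: one uses that $\int_0^\infty e^{-t}\,dt=1$ and that $h$ takes values in $[0,1]$ for the $h_{a,\eps}$ family, so $\IE h(Z)-h(w+t)\in[-1,1]$ and in fact the integral telescopes nicely because $h_{a,\eps}$ is monotone. For the general bounded case, $\abs{f(w)}\le\int_0^\infty \abs{\IE h(Z)-h(w+t)}e^{-t}\,dt\le\norm{h}$ once one notes $\abs{\IE h(Z)-h(w+t)}\le\norm{h}$ is too weak — instead use $\IE h(Z)-h(w+t)=\int_0^\infty(h(s)-h(w+t))e^{-s}\,ds$ and bound $\abs{h(s)-h(w+t)}\le\norm{h}$... actually the clean route is $\abs{f(w)}=\abs{\int_0^\infty (h(w+t)-\IE h(Z))e^{-t}dt}$ and, since $\IE h(Z)$ is an average of $h$ values, $\abs{h(w+t)-\IE h(Z)}\le \operatorname{osc}(h)\le \norm{h}$ when $h\ge 0$; for signed $h$ one splits. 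Then $\norm{f'}\le\norm{f}+\norm{h-\IE h(Z)}\le\norm{h}+\norm{h}=2\norm{h}$, giving \eqref{42}.

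For the Lipschitz case \eqref{43}, from $f(w)=\int_0^\infty(\IE h(Z)-h(w+t))e^{-t}\,dt$ and $\IE h(Z)=\int_0^\infty h(s)e^{-s}\,ds$, write $\IE h(Z)-h(w+t)=\int_0^\infty(h(s)-h(w+t))e^{-s}\,ds$ so $\abs{\IE h(Z)-h(w+t)}\le\norm{h'}\int_0^\infty\abs{s-w-t}e^{-s}\,ds\le\norm{h'}(1+w+t)$; integrating against $e^{-t}$ gives $\abs{f(w)}\le\norm{h'}(1+w+1)$ — to sharpen to $(1+w)$ one integrates by parts using the monotonicity/Lipschitz structure more carefully, exploiting that $f(0)$ can be bounded by $\norm{h'}$ directly and that $f'(w)=f(w)+h(w)-\IE h(Z)$ with $\abs{h(w)-\IE h(Z)}\le\norm{h'}(1+w)$. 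Then $\norm{f'}\le\norm{h'}$ requires the genuinely delicate estimate: differentiating $f(w)=\int_0^\infty(\IE h(Z)-h(w+t))e^{-t}dt$ gives $f'(w)=-\int_0^\infty h'(w+t)e^{-t}dt$, immediately yielding $\norm{f'}\le\norm{h'}$, and then $\norm{f''}\le\norm{f'}+\norm{h'}\le 2\norm{h'}$. The oscillation bounds \eqref{45b} follow from $\abs{f(w+t)-f(w)}\le\abs{t}\,\norm{f'}$ combined with the uniform bound $\norm{f_{a,\eps}}\le 1$ and $\norm{f'_{a,\eps}}\le 1$ (so the difference is at most $2$, but one argues it is at most $1$ by monotonicity of $f_{a,\eps}$, which is monotone because $h_{a,\eps}$ is).

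The main obstacle I expect is \eqref{46}, the refined modulus-of-continuity bound for $f'_{a,\eps}$. Here $f'_{a,\eps}(w)=-\int_0^\infty h'_{a,\eps}(w+t)e^{-t}dt$ where $h'_{a,\eps}(x)=-\eps^{-1}\I[a-\eps\le x\le a]$ is a (signed) approximation to $-\delta_a$. One must decompose $f'_{a,\eps}(w+t)-f'_{a,\eps}(w)$ by writing it as an integral of $h''_{a,\eps}$-type terms, or more robustly, go back to the integral representation and split the $t$-integral into the region where the indicator window $[a-\eps,a]$ is crossed and the region where it is not; on the first region one gets the explicit correction term $\eps^{-1}\int_{t\wedge 0}^{t\vee 0}\I[a-\eps\le w+u\le a]\,du$, and on the complement one gets the Lipschitz-type bound $\abs{t}\wedge 1$ using that $f''_{a,\eps}$ is bounded by $1$ away from the window. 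Making this split precise — in particular handling both signs of $t$ uniformly and getting the clean $(\abs{t}\wedge 1)$ rather than just $\abs{t}$ — is the fiddly part, but it is a direct (if careful) computation from \eqref{41} and the explicit form of $h_{a,\eps}$; passing to $\eps=0$ at the end recovers the statements for $h_{a,0}=\I[\cdot\le a]$ by monotone/dominated convergence.
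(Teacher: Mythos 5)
Your overall strategy is the same as the paper's: work from the representation $f(w)=\IE h(Z)-\IE h(w+Z)$ coming from \eq{41}, and reduce the derivative bounds to it via $f'(w)=f(w)+h(w)-\IE h(Z)$ and $f''(w)=f'(w)+h'(w)$; your derivations of $\norm{f'}\leq\norm{h'}$, $\norm{f''}\leq2\norm{h'}$ and $\norm{f_{a,\eps}}\leq1$ are fine. But there are concrete gaps. The main one: you repeatedly use $\norm{f'_{a,\eps}}\leq1$ (it is part of \eq{45}, you cite it for \eq{45b}, and it is behind your claim that $f''_{a,\eps}$ is bounded by $1$ off the window in \eq{46}), yet you never prove it; your general bound \eq{42} only gives $\norm{f'_{a,\eps}}\leq2\norm{h_{a,\eps}}=2$. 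The paper gets the constant $1$ from the closed-form solution $f_{a,0}(x)=(e^{x-a}\wedge1)-e^{-a}$, $f'_{a,0}(x)=e^{x-a}\,\I[x\leq a]\in[0,1]$, together with $f_{a,\eps}(x)=\eps^{-1}\int_0^\eps f_{a,0}(x+s)\,ds$; this single computation is the missing ingredient, and it also makes \eq{45b} immediate (both $f_{a,\eps}$ and $f'_{a,\eps}$ take values in an interval of length at most $1$ -- no monotonicity argument is needed). If you prefer to stay with your representation, you must at least note $f'_{a,\eps}(w)=-\IE h'_{a,\eps}(w+Z)=\eps^{-1}\IP[a-\eps\leq w+Z\leq a]\leq\eps^{-1}(1-e^{-\eps})\leq1$.

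Two further points. For \eq{43} your estimate as written gives $(2+w)\norm{h'}$ and you only gesture at a sharpening; the fix is to keep the same $Z$ in both expectations, $\abs{f(w)}=\abs{\IE\klg{h(Z)-h(w+Z)}}\leq w\norm{h'}\leq(1+w)\norm{h'}$. For \eq{42} your hesitation is well founded: for signed bounded $h$ one only gets $\abs{f(w)}\leq\sup h-\inf h\leq2\norm{h}$, and no "splitting" improves this (take $h=-1$ on $[0,M]$ and $h=+1$ beyond, $M$ large; then $f(w)\approx-2$ for $w>M$), so the stated constant really uses $h\geq0$, exactly as for $h_{a,\eps}$ -- the paper's proof makes the same implicit reduction when it bounds $\abs{h-\IE h(Z)}$ by $\norm{h}$. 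Finally, for \eq{46} you leave precisely the fiddly part undone; the paper's route is much shorter and you should adopt it: since the constant $\IE h(Z)$ cancels in the Stein equation, $f'_{a,\eps}(w+t)-f'_{a,\eps}(w)=\bklr{f_{a,\eps}(w+t)-f_{a,\eps}(w)}+\bklr{h_{a,\eps}(w+t)-h_{a,\eps}(w)}$; the first difference is at most $\abs{t}\wedge1$ by the bounds just discussed, and the second equals $-\eps^{-1}\int_0^t\I[a-\eps\leq w+u\leq a]\,du$ exactly, which is the claimed correction term for either sign of $t$.
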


\begin{pf} Write $\~h(w) = h(w) - \mathbh{E} h(Z)$. Assume now that $h$ is bounded.
Then
\begin{eqnarray*}
    \abs{f(w)} \leq e^w\int_w^\infty \abs{\~h(x)}e^{-x}\,dx \leq  \norm{h}.
\end{eqnarray*}
Rearranging \eqref{40} we have $f'(w) = f(w) + \~h(w)$, hence
\begin{eqnarray*}
    \abs{f'(w)} \leq \abs{f(w)} + \abs{\~h(w)}\leq 2\norm{h}.
\end{eqnarray*}
This proves \eqref{42}. Assume now that $h$ is Lipschitz. We can further assume
without loss of generality that $h(0) = 0$ as $f$ will not change under shift;
hence we may assume that $\abs{h(x)}\leq x\norm{h'}$. Thus,
\begin{eqnarray*}
    \abs{f(w)} \leq e^w\int_w^\infty x\norm{h'}e^{-x}\,dx = (1+w)\norm{h'},
\end{eqnarray*}
which  is the first bound of \eqref{43}. Now, differentiate both sides of
\eqref{40} to obtain
\begin{equation}\label{47}
    f''(w) - f'(w) = h'(w),
\end{equation}
hence, analogous to \eqref{41}, we have
\begin{eqnarray*}
    f'(w) = - e^w \int_w^\infty h'(x)e^{-x}\, dx.
\end{eqnarray*}
The same arguments as before lead to the second and third bound of \eqref{43}.

We now look at the properties of $f_{a,\varepsilon}$. It is easy to check that
\begin{equation}\label{47b}
  f_{a,0}(x) = (e^{x-a}\wedge1)-e^{-a}, \qquad
  f_{a,0}'(x) = e^{x-a}\mathrm{I}[x\leq a]
\end{equation}
is the explicit solution to \eqref{47} with respect to $h_{a,0}$. Now,
it is not difficult to see that, for $\varepsilon>0$, we can write
\begin{eqnarray*}
    f_{a,\varepsilon}(x) = \varepsilon^{-1}\int_0^\varepsilon f_{a,0}(x+s) \,d s
\end{eqnarray*}
and this $f_{a,\varepsilon}$ satisfies \eqref{40}. These representations immediately lead
to the bounds \eqref{45} and \eqref{45b} for $\varepsilon\geq 0$ from the explicit formulas
\eqref{47b}. Now let $\varepsilon>0$; observe that, from \eqref{47},
\begin{eqnarray*}
    f'(x+t)-f'(x) = \bigl(f(x+t)-f(x)\bigr)
    + \bigl(h(x+t)-h(x)\bigr).
\end{eqnarray*}
Again from \eqref{47b}, we deduce that $\abs{f_{a,\varepsilon}(x+t)-f_{a,\varepsilon}(x)}\leq
(\abs{t}\wedge1)$, which yields the
first part of the bound~\eqref{46}. For the second part, assume that $t>0$ and
write
\begin{eqnarray*}
  h_{a,\varepsilon}(x+t)-h_{a,\varepsilon}(x) = \int_0^t h'_{a,\varepsilon}(x+s)\,d s
   = -\varepsilon^{-1}\int_0^t \mathrm{I}[a-\varepsilon\leq x+u\leq a]\, d u.
\end{eqnarray*}
Taking the absolute value this gives the second part of the bound \eqref{46} for
$t>0$; a~similar argument yields the same bound for $t<0$.
\end{pf}

The following lemmas are straightforward and hence given without proof.

\begin{lemma}[(Smoothing lemma)]\label{lem2} For any $\varepsilon>0$
\begin{eqnarray*}
    d_{\mathrm{K}}\bklr{\mathscr{L}(W),\mathscr{L}(Z)} \leq \varepsilon
    + \sup_{a>0}\abs{\mathbh{E} h_{a,\varepsilon}(W)- \mathbh{E} h_{a,\varepsilon}(Z)},
\end{eqnarray*}
where $h_{a,\varepsilon}$ are defined as in Lemma~\ref{lem1}.
\end{lemma}

\begin{lemma}[(Concentration inequality)]\label{lem3} For any random variable $V$,
 \begin{eqnarray*}
    \mathbh{P}[a\leq V \leq b]\leq (b-a) + 2d_{\mathrm{K}}\bklr{\mathscr{L}(V),\operatorname{Exp}(1)}.
 \end{eqnarray*}
\end{lemma}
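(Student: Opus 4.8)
The plan is to control the distribution function of $V$ from above at the point $b$ and from below at the point $a$ by that of $Z\sim\Exp(1)$, and then to use that the exponential density is bounded by~$1$. Write $\eta:=\dk\bklr{\law(V),\Exp(1)}$ and assume $a\leq b$ (otherwise the left-hand side vanishes and there is nothing to prove). I would start from the decomposition
\be
  \IP[a\leq V\leq b]=\IP[V\leq b]-\IP[V<a].
\ee

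For the first term, the definition of $\dk$ gives directly $\IP[V\leq b]\leq\IP[Z\leq b]+\eta$. For the second term I would write $\IP[V<a]=\sup_{z<a}\IP[V\leq z]$; since $\IP[V\leq z]\geq\IP[Z\leq z]-\eta$ for every $z$, taking the supremum over $z<a$ and using the continuity of the exponential distribution function (so that $\sup_{z<a}\IP[Z\leq z]=\IP[Z\leq a]$) gives $\IP[V<a]\geq\IP[Z\leq a]-\eta$. Substituting both bounds yields
\be
  \IP[a\leq V\leq b]\leq\IP[Z\leq b]-\IP[Z\leq a]+2\eta=\IP[a<Z\leq b]+2\eta,
\ee
and since the density of $Z$ is bounded by $1$, one has $\IP[a<Z\leq b]\leq b-a$, which is the claim.

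The only point that needs a little care is the inequality $\IP[V<a]\geq\IP[Z\leq a]-\eta$: because $V$ may have an atom at $a$, one cannot simply replace $\IP[V<a]$ by $\IP[V\leq a]$, which is why I pass to the supremum over $z<a$ and invoke the continuity of $Z$. This is genuinely the only subtlety here — consistent with the authors' remark that the lemma is straightforward — everything else being the two endpoint comparisons and the trivial bound on the exponential density.
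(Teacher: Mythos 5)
Your proof is correct, and since the paper explicitly omits a proof of this lemma (declaring it straightforward), your argument is exactly the intended one: compare $\IP[V\leq b]$ and $\IP[V<a]$ with the exponential distribution function via $\dk$, then use that the $\Exp(1)$ distribution function is Lipschitz with constant $1$, so $\IP[a<Z\leq b]\leq b-a$. Your handling of the left endpoint via $\IP[V<a]=\sup_{z<a}\IP[V\leq z]$ together with the continuity of the exponential law is the right way to deal with a possible atom of $V$ at $a$, and is the only point requiring care (the statement is of course meant for $a\leq b$, as in its use in the paper).
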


For the rest of the article, write $\kappa = d_{\mathrm{K}}\bklr{\mathscr{L}(W),\operatorname{Exp}(1)}$.

\begin{pf*}{Proof of Theorem~\ref{thm1}}
Let $\Delta := W-W^e$. Define $I_1 := \mathrm{I}[\abs{\Delta}\leq \beta]$; note that $W^e$ may
not have
finite first moment. With $f$ as in \eqref{40} with respect
to \eqref{44}, the quantity $\mathbh{E} f'(W^e)$ is well defined as $\norm{f'}<\infty$,
and we have
\begin{eqnarray*}
    &&\mathbh{E}\klg{f'(W)-f(W)}
    \\
    &&\qquad =
    \mathbh{E}\bigl\{{I_1\bigl(f'(W)-f'(W^e)\bigr)}\bigr\}+\mathbh{E}\bigl\{{(1-I_1)\bigl(f'(W)-f'(W^e)\bigr)}\bigr\}
=:J_1+J_2.
\end{eqnarray*}
Using \eqref{45}, $\abs{J_2}\leq\mathbh{P}[\abs{\Delta}>\beta]$. Now, using \eqref{47} and in
the last step Lemma~\ref{lem3},
\begin{eqnarray*}
    J_1 & = &\mathbh{E} \biggl\{{I_1\int_0^\Delta f''(W+t) \,d t}\biggr\}
    \\
    & =& \mathbh{E} \biggl\{{I_1 \int_0^\Delta \bigl({f'(W+t) - \varepsilon^{-1}\mathrm{I}[a-\varepsilon \leq W+t
        \leq a]}\bigr)\,d t }\biggr\}
        \\
    & \leq& \mathbh{E}\abs{I_1\Delta} +\int_{-\beta}^0 \mathbh{P}[a-\varepsilon\leq W+t\leq a]\,dt
    \leq 2\beta + 2\beta\varepsilon^{-1}\kappa.
\end{eqnarray*}
Similarly,
\begin{eqnarray*}
    J_1  \geq -\mathbh{E}\abs{I_1\Delta} - \int_0^{\beta} \mathbh{P}[a-\varepsilon\leq W+t\leq a]\,dt
    \geq -2\beta - 2\beta\varepsilon^{-1}\kappa,
\end{eqnarray*}
hence $\abs{J_1}\leq 2\beta + 2\beta\varepsilon^{-1}\kappa$.
Using Lemma~\ref{lem2} and choosing $\varepsilon = 4\beta$,
\begin{eqnarray*}
    \kappa \leq \varepsilon + \mathbh{P}[\abs{\Delta}>\beta] + 2\beta + 2\beta\varepsilon^{-1}\kappa
    \leq  \mathbh{P}[\abs{\Delta}>\beta] + 6\beta + 0.5\kappa.
\end{eqnarray*}
Solving for $\kappa$ proves~\eqref{5}.

To obtain \eqref{6}, write
\begin{eqnarray*}
    \mathbh{E}\bklg{f'(W^e)-f(W^e)}
     &=& \mathbh{E}\bklg{f(W)-f(W^e)}
     \\
    & =& \mathbh{E}\bigl\{{I_1\bigl({f(W)-f(W^e)}\bigr)}\bigr\} +
        \mathbh{E}\bigl\{{(1-I_1)\bigl({f(W)-f(W^e)}\bigr)}\bigr\}.
\end{eqnarray*}
Hence, using Taylor's expansion along with the bounds \eqref{45} for $\varepsilon=0$,
\begin{eqnarray*}
    \abs{\mathbh{E}{f'(W^e)-f(W^e)}}\leq \norm{f'}\mathbh{E}\abs{I_1\Delta} +
\mathbh{P}[\abs{\Delta}>\beta] \leq \beta + \mathbh{P}[\abs{\Delta}>\beta],
\end{eqnarray*}
which gives~\eqref{6}.

Assume now in addition that $W$ has finite variance so that $W^e$ has finite
mean. Then
\begin{eqnarray*}
    \abs{\mathbh{E}\klg{f'(W)-f(W)}}
     = \abs{\mathbh{E}\klg{f'(W)-f'(W^e)}}
     \leq \norm{f''}\mathbh{E}\abs{\Delta}.
\end{eqnarray*}
From the bound \eqref{43}, \eqref{7} follows. Also,
\begin{eqnarray*}
    \abs{\mathbh{E}\bklg{f'(W^e)-f(W^e)}} \leq \norm{f'}\mathbh{E}\abs{\Delta}
\end{eqnarray*}
which yields \eqref{8} from \eqref{45} with $\varepsilon= 0$; the
remark after \eqref{8} follows\break from~\eqref{43}.
\end{pf*}

\begin{pf*}{Proof Theorem~\ref{thm2}}
Let $f$ be the solution \eqref{40} to \eqref{41}, hence $f(0) = 0$, and assume that
$f$ is
Lipschitz. From the fundamental theorem of calculus, we have
\begin{eqnarray*}
    f(W') - f(W) = \int_0^D f'(W+t)\, d t.
\end{eqnarray*}
 Multiplying both sides by $G$ and comparing it with the left-hand side of
\eqref{40}, we have
\begin{eqnarray*}
    f'(W) - f(W) & =& Gf(W')-Gf(W)-f(W)
    \\
        &&{} + (1-GD)f'(W'')
        \\
        &&{} + (1-GD)\bigl(f'(W)-f'(W'')\bigr)
        \\
        &&{} - G\int_0^D \bigl(f'(W+t)-f'(W)\bigr)\,d t.
\end{eqnarray*}
Note that we can take expectation component-wise due to the moment assumptions.
Hence,
\begin{eqnarray*}
    \mathbh{E} h(W)-\mathbh{E} h(Z) =  R_1(f) + R_2(f) + R_3(f) -
    R_4(f),
\end{eqnarray*}
where
\begin{eqnarray*}
    R_1(f) &=& \mathbh{E}\bklg{Gf(W')-Gf(W)-f(W)},
    \\
    R_2(f) &=& \mathbh{E}\{{(1-GD)f'(W'')}\},
    \\
    R_3(f) &=& \mathbh{E}\bigl\{{(1-GD)\bigl(f'(W)-f'(W'')\bigr)}\bigr\},
    \\
    R_4(f) &=& \mathbh{E}\biggl\{{G\int_0^D \bigl(f'(W+t)-f'(W)\bigr)\,d t}\biggr\}.
\end{eqnarray*}
Assume now that $h\in\%F_{\mathrm{BW}}$ and $f$ the solution to \eqref{40}. Then
from \eqref{42} and \eqref{43} we obtain $\norm{f}\leq 1$, $\norm{f'}\leq1$ and
$\norm{f''}\leq 2$. Hence, $f\in\%F_{\mathrm{BW}}$,
$\abs{R_1(f)} \leq r_1(\%F_{\mathrm{BW}})$ and $\abs{R_2(f)} \leq r_2$.
Furthermore,
\begin{eqnarray*}
    \abs{R_3(f)} & \leq &\mathbh{E}\bigl|{(1-GD)\bigl(f'(W'')-f'(W)\bigr)}\bigr|
    \\
    & \leq &2\mathbh{E}\klg{\abs{1-GD}\mathrm{I}[\abs{D'}>1]}
    +2\mathbh{E}\klg{\abs{1-GD}(\abs{D'}\wedge 1)}
    \\
    & =& 2r_3' + 2r_4'
\end{eqnarray*}
and
\begin{eqnarray*}
    \abs{R_4(f)} & \leq& \mathbh{E}\biggl|{G\int_0^D\bigl(f'(W+t)-f'(W)\bigr)\,dt}\biggr|
    \\
    &\leq &2\mathbh{E}\bigl|{GD \mathrm{I}[\abs{D}>1]}\bigr|+
    2\mathbh{E}\babs{G(D^2\wedge 1)}
    \\
    & =& 2r_3 + 2r_4.
\end{eqnarray*}
This yields the $d_{\mathrm{BW}}$ results. Now let $h\in\%F_{\mathrm{W}}$ and $f$ the
solution to \eqref{40}. Then,
from \eqref{42} and \eqref{43}, we have $\abs{f(x)}\leq (1+x)$, $\norm{f'}\leq1$
and $\norm{f''}\leq 2$, hence the bounds on $R_2(f)$, $R_3(f)$ and $R_4(f)$
remain, whereas now $f\in\%F_{\mathrm{W}}$ and, thus, $\abs{R_1(f)}\leq
r_1(\%F_{\mathrm{W}})$. This proves the $d_{\mathrm{W}}$ estimate.

Now let $f$ be the solution to \eqref{40} with respect to $h_{a,\varepsilon}$ as
in~\eqref{44}. Then, from \eqref{45}, we have $\norm{f}\leq 1$ and $\norm{f'}\leq 1$,
hence $f\in\%F_{\mathrm{BW}}$, $\abs{R_1(f)}\leq r_1(\%F_{\mathrm{BW}})$
and $\abs{R_2(f)}\leq r_2$.
Let $I_1 = \mathrm{I}[\abs{G}\leq \alpha,\abs{D}\leq \beta',\abs{D'}\leq\beta']$. Write
\begin{eqnarray*}
 R_3(f) &=& \mathbh{E}\bigl\{{(1-I_1)(1-GD)\bigl(f'(W'')-f'(W)\bigr)}\bigr\}
 \\
    &&{} + \mathbh{E}\bigl\{{I_1(1-GD)\bigl(f'(W'')-f'(W)\bigr)}\bigr\} =: J_1 + J_2.
\end{eqnarray*}
Using \eqref{45}, $\abs{J_1}\leq r_5'$ is immediate. Using \eqref{46} and
Lemma~\ref{lem3},
\begin{eqnarray*}
    \abs{J_2} &\leq&\mathbh{E}\bigl|{(GD-1)I_1\bigl(f'(W'')-f'(W)\bigr)}\bigr|
    \\
    & \leq& (\alpha\beta+1)\beta' +
    (\alpha\beta+1)\varepsilon^{-1}\int_{-\beta'}^{\beta'} \mathbh{P}[a-\varepsilon\leq W + u \leq a] \,d u
    \\
    & \leq& (\alpha\beta+1)\beta' +
    (\alpha\beta+1)\varepsilon^{-1}\int_{-\beta'}^{\beta'}(\varepsilon+2\kappa)\, d u
    \\
    & =& 3(\alpha\beta+1)\beta'
    + 4(\alpha\beta+1)\beta'\varepsilon^{-1}\kappa.
\end{eqnarray*}

Similarly, let $I_2 = \mathrm{I}[\abs{G}\leq \alpha,\abs{D}\leq \beta]$ and write
\begin{eqnarray*}
    R_4(f) &=& \mathbh{E}\biggl\{{G(1-I_2)\int_0^D \bigl(f'(W+t)-f'(W)\bigr)\, d t}\biggr\}
    \\
        &&{}  + \mathbh{E}\biggl\{{G I_2\int_0^D \bigl(f'(W+t)-f'(W)\bigr)\, d t}\biggr\} =: J_3+J_4.
\end{eqnarray*}
By \eqref{45}, $\abs{J_3}\leq r_5$. Using again \eqref{46} and
Lemma~\ref{lem3},
\begin{eqnarray*}
    \abs{J_4}&\leq& \mathbh{E}\biggl\{{G I_2\int_{D\wedge 0}^{D\vee
0}\abs{f'(W+t)-f'(W)}\,d t}\biggr\}
\\
    &\leq& \alpha\mathbh{E}\biggl\{{\int_{-\beta}^{\beta}\biggl[{(\abs{t}\wedge1)
    + \varepsilon^{-1}\int_{t\wedge 0}^{t\vee 0}\mathrm{I}[a-\varepsilon\leq W+u\leq a]\,du}\biggr]\,dt}\biggr\}
    \\
    &\leq& \alpha\beta^2 +
    \alpha\varepsilon^{-1}\mathbh{E}\biggl\{{\int_{-\beta}^{\beta}\int_{t\wedge 0}^{t\vee
        0}(\varepsilon+2\kappa)\,du\,d t}\biggr\} = 2\alpha\beta^2 + 2\alpha\beta^2\varepsilon^{-1}\kappa.
\end{eqnarray*}
Using Lemma~\ref{lem2} and collecting the bounds above, we obtain
\begin{eqnarray*}
    \kappa
    &\leq& \varepsilon + r_1(\%F_{\mathrm{BW}}) + r_2 +
                \abs{J_1} + \abs{J_2} + \abs{J_3} + \abs{J_4}\\
    & \leq& \varepsilon + r_1(\%F_{\mathrm{BW}}) + r_2 + r_5 + r_5'
        +3(\alpha\beta+1)\beta' + 2\alpha\beta^2 \\
    &&{} + \bigl(4(\alpha\beta+1)\beta'+2\alpha\beta^2\bigr)\varepsilon^{-1}\kappa
\end{eqnarray*}
so that, setting $\varepsilon = 8(\alpha\beta+1)\beta'+4\alpha\beta^2$,
\begin{eqnarray*}
    \kappa
    \leq \varepsilon + r_1(\%F_{\mathrm{BW}}) + r_2 + r_5 + r_5'
        +11(\alpha\beta+1)\beta' + 6\alpha\beta^2
    + 0.5\kappa.
\end{eqnarray*}
Solving for $\kappa$ yields the final bound.
\end{pf*}

\section*{Acknowledgments}

The authors would like to express gratitude for the gracious hospitality of
Louis Chen and Andrew Barbour during a visit to the National University of
Singapore in January 2009, where a portion of this work was completed.   We also thank the referees for their
helpful comments. We are indebted to Nathan Ross for
many valuable suggestions and for pointing out an error (and its elegant
solution) in an early version of this paper.  We also thank Mark Brown, Fraser
Daly and Larry Goldstein  for inspiring discussions.


\printaddresses

\end{document}